\DeclareMathOperator{\spt}{spt}
\newcommand{\Lam}{\Lambda}
\newcommand{\tht}{\theta}
\newcommand{\al}{\alpha}
\newcommand{\eps}{\epsilon}
\newcommand{\be}{\beta}
\newcommand{\ph}{\varphi}
\newcommand{\De}{\Delta}
\newcommand{\de}{\delta}
\newcommand{\s}{\sigma}
\newcommand{\gam}{\gamma}
\newcommand{\kap}{\kappa}
\newcommand{\lam}{\lambda}
\newcommand{\N}{\mathbb{N}}
\newcommand{\R}{\mathbb{R}}
\newcommand{\Z}{\mathbb{Z}}
\newcommand{\cZ}{\mathcal{Z}}
\newcommand{\del}{\nabla}
\newcommand{\bdy}{\partial}
\newcommand{\ddt}{\frac{d}{dt}}
\newcommand{\til}[1]{\widetilde{#1}}
\newcommand{\lp}{\triangle}
\newcommand{\lpj}{\triangle_j}
\newcommand{\lpk}{\triangle_k}
\newcommand{\lpl}{\triangle_\ell}
\newcommand{\e}{e^{\lam t^{\al/\kap}\Lam^\al}}
\newcommand{\smod}{\setminus}
\newcommand{\F}{\mathcal{F}}
\newcommand{\sub}{\subset}
\newcommand{\ls}{\lesssim}
\newcommand{\goesto}{\rightarrow}
\newcommand{\hbes}{\dot{B}^s_{p,q}}
\newcommand{\Besb}{\dot{B}^{1+2/p-\kap+\be}_{p,q}}
\newcommand{\Bess}{\dot{B}^{\s+\be}_{p,q}}
\newcommand{\Bessig}{\dot{B}^{\s}_{p,q}}
\newcommand{\Sch}{\mathcal{S}}
\newcommand{\Sob}[2]{\lVert#1\rVert_{#2}}
\newcommand{\abs}[1]{\lvert#1\rvert}
\newcommand{\no}[1]{\lVert#1\rVert}
\newcommand{\lb}{\langle}
\newcommand{\rb}{\rangle}
\newcommand{\req}[1]{(\ref{#1})}
\newtheorem{lem}{Lemma}
\newtheorem{thm}{Theorem}
\newtheorem{prop}[lem]{Proposition}
\newtheorem{cor}[thm]{Corollary}
\newtheorem*{thmcomm}{Theorem 2}
\title[On Gevrey regularity for supercritical SQG]
{On Gevrey regularity of the supercritical SQG equation in critical Besov spaces}
\author{A. Biswas$^{1}$}
\address{$^1$Department of Mathematics and Statistics\\
University of Maryland, Baltimore County\\ Baltimore, MD 21250.}
\author{V. Martinez$^{2}$}
\address{$^2$Department of Mathematics\\
Indiana University\\ Bloomington, IN 47405}
\author{P. Silva$^{3}$}
\address{$^2$Department of Mathematics\\
Indiana University\\ Bloomington, IN 47405}
\address{$\dagger$ corresponding author}
\email[A. Biswas]{abiswas@umbc.edu}
\email[V. R. Martinez]{vinmarti@indiana.edu}
\email[P.S. Silva]{pssilve@indiana.edu}
\begin{document}
\begin{abstract}
In this paper we show that the solution of the supercrtical surface quasi-geostrophic (SQG) equation, starting from initial data in a homogeneous critical Besov space belong to a subanalytic Gevrey class.  In particular, we improve upon the result of Dong and Li in \cite{dong:li:crit:besov}, where they showed that the solutions of Chen-Miao-Zhang (cf. \cite{cmz}) are classical solutions.  We extend the approach of Biswas (cf. \cite{biswas:qg}) to critical, $L^p$-based Besov spaces, and adapt the point of view of Lemarie-Rieusset (cf. \cite{lmr}), who treated the operator arising from applying the analytic Gevrey operator to a product of analytic functions as a bilinear multiplier operator.  In order to obtain $L^p$ bounds, we prove that our bilinear multiplier operator is of Marcinkiewicz type, and show that due to additional localizations inherited from working in Besov spaces, this condition implies boundedness.
\end{abstract}

\maketitle
\section{Introduction}
We consider the two-dimensional dissipative surface quasi-geostrophic (SQG) equation given by
	\begin{align}\label{qg}
		\begin{cases}
			\bdy_t\tht+\Lam^\kap\tht-u\cdotp\del\tht=0,\\
			u=(-R_2\tht,R_1\tht),\\
			\tht(x,0)=\tht_0(x),
		\end{cases}
	\end{align}
where $R_j$ is the $j$-th Riesz transform, and $\Lam^\kap:=(-\De)^{\kap/2}$ for $0<\kap\leq2$.

The study of \req{qg} can be divided into three cases: supercritical ($\kap<1$),  critical ($\kap=1$), and subcritical ($\kap>1$), while the case of no diffusion is called the \textit{inviscid} case.  The QG equation has received much attention over the years since it can be viewed as a toy model for the three-dimensional NSE and Euler equations.   It is also of independent interest as it produces turbulent flows different from those arising from Navier-Stokes or Euler.  For instance, the absence of anomalous dissipation in SQG turbulence has recently been established in \cite{const:tarf:vic2}, in contrast with three-dimensional turbulence where this phenomenon has been observed both numerically and experimentally.

The analytical and numerical study of the inviscid SQG equation was initiated by Constantin, Majda, and Tabak in \cite{cmt}, consequently sparking great interest in the study of SQG.  In \cite{cor:sqg}, C\'ordoba positively settled the conjecture from \cite{cmt} that the formation of a simple type of blow-up could not occur.  In general, however, formation of singularities for solutions of inviscid SQG is still open.   Therefore, much focus has been directed towards studying \req{qg} to explore the role of dissipation in preventing blow-up. 

The well-posedness of the subcritical QG equation was established by Resnick in \cite{resnick}, while the long-term behavior of its solutions were studied in (cf. \cite{const:wu:qgwksol, ju:qgattract}).  Breakthrough in the critical case was met relatively recently in the papers of \cite{caff:vass} and \cite{kis:naz:vol}.  Since then, several proofs of the global regularity problem have been found (cf. \cite{kis:naz, const:vic,const:tarf:vic}).  From these techniques, global well-posedness has also been established in other function spaces such as Sobolev and Besov spaces, (cf. \cite{dong:du, dong:li:crit:besov}).  In spite of this, the global regularity problem for the supercritical case is still open.  While it has been established for the  ``slightly" supercritical case by \cite{dab:kis:sil:vic}, where the dissipation is logarithmically enhanced, only conditional or so-called eventual regularity results are known (cf. \cite{const:wu:qgholder,dab}).

This paper focuses on the supercritical case.  In particular, we establish existence of Gevrey regular (see \req{gev:reg}) solutions to SQG (see Theorem \ref{main:thm}) whose initial data lie only in a homogeneous Besov space (see \req{besov} and \req{besov:norm}).  The study of Gevrey regularity or more generally, higher-order regularity of solutions to critical and subcritical SQG has been previously pursued in (\cite{bae:bis:tad1, biswas:qg, biswas:gev, dong, dong:li:crit:besov, dong:li:subcrit}).  The approach taken here is the one from \cite{biswas:qg}, where it is shown that Gevrey regular solutions to SQG exist starting from critical, homogeneous Sobolev spaces exist.  In particular, the approach from \cite{biswas:qg} is inspired by the classical work of Foias-Temam in \cite{ft}, who introduced the technique of Gevrey to establish analyticity in both space and time of solutions to the Navier-Stokes equations in two and three dimensions.  In order to estimate the nonlinear term in our Besov-space-based Gevrey norm (see \req{gev:reg}), we view the nonlinear term as a bilinear multiplier operator and obtain $L^p\times L^q\goesto L^r$ bounds for this operator, where $1/r=1/p+1/q$ with $1<p<\infty$ and $1\leq q\leq\infty$ (Theorem \ref{axis:thm}).  This point of view of was taken by Lemarie-Rieusset in \cite{lmr}, where spatial analyticity of solutions to the NSE starting from $L^p$ initial data was established.  Due to supercritical dissipation, we establish commutator estimates (Theorem \ref{main:comm:thm}) similar to those found in \cite{biswas:qg}, where the commutator considered in \cite{miura} is modified to account for Gevrey regularity.

The notations and conventions used throughout the paper are introduced in Section \ref{sect:notation}, while the statements of our main theorems are located in Section \ref{sect:results}.  We establish our commutator estimate in Section \ref{sect:comm} and Gevrey regularity of solutions to SQG in Section \ref{sect:apriori}.  The proof of our multiplier theorem is relegated to the Appendix (Section \ref{sect:app}).

\section{Notation}\label{sect:notation}

\subsection{Littlewood-Paley decomposition and related inequalities}
Let $\psi_0$ be a radial bump function such that $\psi_0(\xi)=1$ when $[|\xi|\leq1/2]\sub\R^d$, and
	\begin{align}\notag
		0\leq\psi_0\leq1\ \text{and}\ \spt\psi_0=[|\xi|\leq1].
	\end{align}
Define $\phi_0(\xi):=\psi_0(\xi/2)-\psi_0(\xi)$.  Observe that
	\begin{align}\notag
		0\leq\ph_0\leq 1\ \text{and}\ \spt\phi_0=[2^{-1}\leq|\xi|\leq2]
	\end{align}
Now for each $j\in\Z$, define $\psi_j:=(\psi_0)_{2^{-j}}$ and $\ph_j:=(\ph_0)_{2^{-j}}$, where we use the notation
	\begin{align}\label{dilations}
		f_\lam(x):=f(\lam x).
	\end{align}
for any $\lam\geq0$.  Then obviously $\ph_0:=\psi_1-\psi_0$ and $\psi_{j+1}=\psi_j+\ph_j$, so that
	\begin{align}\label{lp:loc}
		\spt\psi_j=[|\xi|\leq2^{j-1}]\ \text{and}\ \spt\ph_j=[2^{j-1}\leq|\xi|\leq2^{j+1}].
	\end{align}
Moreover, we have 
	\begin{align}\notag
		\sum_{j\geq k}\ph_j=1\ \text{for }\ \xi\in\R^d\smod\{\mathbf{0}\}
	\end{align}
One can then show that $f=\lim_{k\goesto\infty}S_kf=:\sum_{k\in\Z}\lpk f$, for any $f\in\Sch'(\R^d)$, where $\Sch'$ is the space of tempered distributions, and 
	\begin{align}
		\lpk f:&=\check{\ph}_k*f,\notag\\
		\til{\lp}_kf:&=\sum_{|k-\ell|\leq 2}\lpl f\notag\\
		S_kf:&=\sum_{\ell\leq k-3}\lpl f\notag
	\end{align}
We call the operators $\lpk$ Littlewood-Paley blocks.  For convenience, we will sometimes use the shorthand $f_k:=\lpk f$.

For functions whose spectral support is compact, one has the Bernstein inequalities, which we will use throughout the paper.  We state it here in terms of Littlewood-Paley blocks.

\begin{lem}[Bernstein inequalities]\label{bern}
Let $1\leq p\leq q\leq\infty$ and $f\in\Sch'(\R^d)$.  Then
	\begin{align}
		2^{js}\Sob{\lpj f}{L^q}\ls&\Sob{\Lam^s \lpj f}{L^q}\ls2^{js+d(1/p-1/q)}\Sob{\lpj f}{L^p},
	\end{align}
for each $j\in\Z$ and $s\in\R$.
\end{lem}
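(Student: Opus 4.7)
\bigskip

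\noindent\textbf{Proof proposal.} Both inequalities will follow from the same principle: because $\triangle_j f$ has spectrum inside the dyadic annulus $\{2^{j-1}\leq\abs{\xi}\leq 2^{j+1}\}$, the multipliers $\Lam^s$ and $\Lam^{-s}$ act on it through convolution with Schwartz kernels whose $L^r$-norms scale explicitly in $j$. The plan is to make this precise by fixing a single auxiliary cutoff and then reading off the two inequalities by Young's convolution inequality together with a rescaling.

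First I would fix a radial bump function $\til\ph\in C_c^\infty(\R^d)$, supported in the fatter annulus $\{2^{-2}\leq\abs{\xi}\leq 2^{2}\}$ and identically $1$ on $\spt\ph_0 = \{2^{-1}\leq\abs{\xi}\leq 2\}$. Setting $\til\ph_j(\xi):=\til\ph(2^{-j}\xi)$, we have $\til\ph_j\equiv 1$ on $\spt\ph_j$, so
\begin{align}\notag
\Lam^s\lpj f = K_{s,j}*\lpj f,\qquad \lpj f = H_{s,j}*\Lam^s\lpj f,
\end{align}
where $K_{s,j}:=\F^{-1}(\abs{\xi}^s\til\ph_j(\xi))$ and $H_{s,j}:=\F^{-1}(\abs{\xi}^{-s}\til\ph_j(\xi))$. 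Because $\til\ph$ is smooth, compactly supported, and vanishes near the origin, both $K_s:=\F^{-1}(\abs{\xi}^s\til\ph(\xi))$ and $H_s:=\F^{-1}(\abs{\xi}^{-s}\til\ph(\xi))$ are Schwartz functions, hence lie in every $L^r(\R^d)$.

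Next I would carry out the scaling. A direct change of variables gives $K_{s,j}(x)=2^{j(d+s)}K_s(2^j x)$, so
\begin{align}\notag
\Sob{K_{s,j}}{L^r} = 2^{js+jd(1-1/r)}\Sob{K_s}{L^r},
\end{align}
and similarly $\Sob{H_{s,j}}{L^r}=2^{-js+jd(1-1/r)}\Sob{H_s}{L^r}$. For the lower bound, applying Young's inequality with $r=1$ yields
\begin{align}\notag
\Sob{\lpj f}{L^q}\leq \Sob{H_{s,j}}{L^1}\Sob{\Lam^s\lpj f}{L^q}\ls 2^{-js}\Sob{\Lam^s\lpj f}{L^q},
\end{align}
which is the first half of the statement. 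For the upper bound, choose $r\in[1,\infty]$ by $1+1/q=1/r+1/p$, which is admissible precisely because $p\leq q$. Young's inequality then gives
\begin{align}\notag
\Sob{\Lam^s\lpj f}{L^q}\leq \Sob{K_{s,j}}{L^r}\Sob{\lpj f}{L^p}\ls 2^{js+jd(1-1/r)}\Sob{\lpj f}{L^p},
\end{align}
and the identity $1-1/r=1/p-1/q$ supplies the advertised exponent.

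There is no real obstacle here: the only care needed is in choosing $\til\ph$ fat enough to be $1$ on $\spt\ph_j$ while still supported away from the origin so that both $K_s$ and $H_s$ are Schwartz, and in booking the Young-exponent arithmetic so that $r\geq 1$. The argument is purely a scaling-plus-convolution computation and does not depend on the particular value of $s\in\R$ nor on $j\in\Z$.
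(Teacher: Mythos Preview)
Your proof is correct and is the standard scaling-plus-Young argument for the Bernstein inequalities. The paper does not actually supply a proof of this lemma; it is stated as a known background result, so there is nothing to compare against.
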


Since we will be working with $L^p$ norms, we will also require the generalized Bernstein inequalities, which was proved in \cite{wu:lowerbounds} and \cite{cmz}.

\begin{lem}[Generalized Bernstein inequalities]\label{gen:bern}
Let $2\leq p\leq\infty$ and $f\in\Sch'(\R^d)$.  Then
	\begin{align}
		2^{j\frac{2sj}p}\Sob{\lpj f}{L^p}\ls&\Sob{\Lam^s \abs{\lpj f}^{p/2}}{L^2}^{\frac{2}p}\ls2^{\frac{2sj}p}\Sob{\lpj f}{L^p},
	\end{align}
for each $j\in\Z$ and $s\in[0,1]$.
\end{lem}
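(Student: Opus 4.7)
The inequality strengthens the classical Bernstein inequality (Lemma~\ref{bern}) by placing the fractional derivative on the \emph{nonlinear} object $|\lpj f|^{p/2}$, which, unlike $\lpj f$ itself, is no longer spectrally localized in an annulus. My plan is to establish the two sides separately, with the upper bound being a fairly direct consequence of the Córdoba--Córdoba pointwise inequality and the lower bound requiring an additional duality argument to handle the loss of spectral localization.

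For the upper bound $\Sob{\Lam^s|\lpj f|^{p/2}}{L^2}\ls 2^{sj}\Sob{\lpj f}{L^p}^{p/2}$, the starting point is the Córdoba--Córdoba pointwise inequality, which asserts that for any convex $\Phi:\R\to\R$ with $\Phi(0)=0$ and any $\sigma\in[0,2]$,
\[
\Phi'(\lpj f)\,\Lam^{\sigma}\lpj f(x)\;\geq\;\Lam^{\sigma}\Phi(\lpj f)(x).
\]
Taking $\Phi(z)=|z|^{p/2}$, which is convex since $p\geq 2$, and $\sigma=2s\in[0,2]$, then multiplying through by the nonnegative factor $|\lpj f|^{p/2}$ and integrating, Plancherel converts $\int g\,\Lam^{2s}g\,dx$ into $\Sob{\Lam^s g}{L^2}^2$ for $g=|\lpj f|^{p/2}$. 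This reduces the upper bound to
\[
\Sob{\Lam^s|\lpj f|^{p/2}}{L^2}^2 \;\leq\; \tfrac{p}{2}\int |\lpj f|^{p-1}\sgn(\lpj f)\,\Lam^{2s}\lpj f\,dx,
\]
which is closed by Hölder's inequality followed by the classical Bernstein estimate $\Sob{\Lam^{2s}\lpj f}{L^p}\ls 2^{2sj}\Sob{\lpj f}{L^p}$.

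For the lower bound $2^{sj}\Sob{\lpj f}{L^p}^{p/2}\ls \Sob{\Lam^s|\lpj f|^{p/2}}{L^2}$, the chief obstruction is the absence of a tight spectral localization of $|\lpj f|^{p/2}$, so the classical Bernstein cannot be applied directly. My plan would be to begin from $\Sob{\lpj f}{L^p}^p = \int \lpj f\cdot|\lpj f|^{p-1}\sgn(\lpj f)\,dx$, use $\lpj f = \til{\lp}_j\lpj f$ to transfer the Littlewood--Paley localization onto the nonlinear factor, and then split $|\lpj f|^{p-1}\sgn(\lpj f) = |\lpj f|^{p/2-1}\sgn(\lpj f)\cdot|\lpj f|^{p/2}$ so as to pair one copy of $|\lpj f|^{p/2}$ against $\Lam^{-s}\Lam^s|\lpj f|^{p/2}$ via the self-adjointness of $\Lam^{-s}$. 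Exploiting the localization of $\til{\lp}_j$ to extract a factor of $2^{-sj}$ and invoking Hölder together with the upper bound already established (for the dual-side factor $|\lpj f|^{p/2-1}\sgn(\lpj f)\in L^{(p/2)'}$) should close the estimate. The main obstacle is precisely this last step: tracking how the nonlinear, non-smooth factor $|\lpj f|^{p/2-1}\sgn(\lpj f)$ interacts with $\Lam^{-s}\til{\lp}_j$, for which I would follow the duality-based approach developed in \cite{wu:lowerbounds} and \cite{cmz}.
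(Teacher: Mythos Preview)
The paper does not supply its own proof of this lemma: immediately before the statement it writes that the generalized Bernstein inequalities were ``proved in \cite{wu:lowerbounds} and \cite{cmz}'' and simply quotes the result. There is therefore no in-paper argument to compare your proposal against.

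That said, your sketch is in line with how those references proceed. Your upper bound is correct as written: the C\'ordoba--C\'ordoba pointwise inequality with $\Phi(z)=|z|^{p/2}$ (convex for $p\geq2$), multiplied by $|\lpj f|^{p/2}$ and integrated, gives exactly
\[
\Sob{\Lam^s|\lpj f|^{p/2}}{L^2}^2 \leq \tfrac{p}{2}\int |\lpj f|^{p-1}\sgn(\lpj f)\,\Lam^{2s}\lpj f\,dx,
\]
and H\"older plus classical Bernstein close it. For the lower bound you correctly identify the difficulty (loss of spectral localization of $|\lpj f|^{p/2}$) and defer to the duality/localization argument of \cite{wu:lowerbounds} and \cite{cmz}; since that is precisely where the paper itself sends the reader, your proposal is appropriate. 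One small caveat: the outline you give for the lower bound is quite schematic, and the actual argument in \cite{cmz} involves a more delicate decomposition than the single splitting you describe, so if you were to write it out fully you would need to follow their proof more closely rather than the bare sketch here.
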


In order to apply these inequalities, we will first need the following positivity lemma, which was initially proved in \cite{cor:cor}, and generalized by Ju in  \cite{ju:qgattract} (see also \cite{const:vic}, \cite{const:tarf:vic}).

\begin{lem}[Positivity lemma]\label{pos:lem}
Let $2\leq p\leq\infty$, $f,\Lam^sf\in L^p(\R^2)$.  Then
	\begin{align}
		\int \Lam^sf\abs{f}^{p-2}f\ dx\geq\frac{2}p\int(\Lam^{\frac{s}2}\abs{f}^{\frac{p}2})^2\ dx.
	\end{align}
\end{lem}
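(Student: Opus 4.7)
The plan is to realize both sides of the inequality as weighted double integrals via the principal-value representation of the fractional Laplacian,
\begin{align*}
\Lam^s g(x) = c_{2,s}\,\mathrm{P.V.}\int_{\R^2}\frac{g(x)-g(y)}{|x-y|^{2+s}}\,dy, \qquad c_{2,s}>0,
\end{align*}
from which one obtains, after symmetrization, the bilinear identity $\int g\,\Lam^s g\,dx = \tfrac{c_{2,s}}{2}\iint(g(x)-g(y))^2\,|x-y|^{-2-s}\,dx\,dy$. The goal is then to reduce the lemma to a pointwise comparison of integrands against the common positive kernel $|x-y|^{-2-s}$.

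First, substituting the representation into $\int \Lam^s f\cdot|f|^{p-2}f\,dx$ and symmetrizing in $x\leftrightarrow y$, the left-hand side becomes
\begin{align*}
\tfrac{c_{2,s}}{2}\iint\frac{(f(x)-f(y))\bigl(|f(x)|^{p-2}f(x)-|f(y)|^{p-2}f(y)\bigr)}{|x-y|^{2+s}}\,dy\,dx.
\end{align*}
Applying the bilinear identity with $g=|f|^{p/2}$ transforms the right-hand side of the lemma into $\tfrac{c_{2,s}}{p}\iint(|f(x)|^{p/2}-|f(y)|^{p/2})^2\,|x-y|^{-2-s}\,dx\,dy$. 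The claim thus reduces to the pointwise inequality
\begin{align*}
(a-b)\bigl(|a|^{p-2}a-|b|^{p-2}b\bigr) \geq \tfrac{2}{p}\bigl(|a|^{p/2}-|b|^{p/2}\bigr)^2, \qquad a,b\in\R,\ p\geq 2.
\end{align*}
By symmetry and homogeneity one may reduce to $a\geq|b|\geq 0$ with $|b|=1$. In the same-sign subcase $a\geq b\geq 0$, this becomes a one-variable inequality in $t=a\geq 1$ that I would verify by Taylor expanding the defect function at $t=1$ (it vanishes to second order with leading coefficient $(p/2)-1\geq 0$) and then checking the sign of its derivative for $t>1$. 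The opposite-sign subcase is strictly easier: the left-hand side becomes $(a+|b|)(a^{p-1}+|b|^{p-1})$, which dominates the same-sign value, while the right-hand side is unchanged.

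The main obstacle is the pointwise algebraic inequality itself. The stated constant $2/p$ is not sharp---the sharp constant being $4(p-1)/p^2$, which exceeds $2/p$ for $p\geq 2$---but verifying even the weaker version requires an honest calculus estimate rather than a soft convexity argument. A secondary technical point is justifying the nonlocal representation and the Fubini interchange for general $f$ with $f,\Lam^s f\in L^p(\R^2)$; this is handled by a density-and-truncation argument, with the endpoint $p=\infty$ interpreted appropriately. Once the pointwise inequality is established, integrating it against the positive kernel $|x-y|^{-2-s}$ immediately yields the lemma.
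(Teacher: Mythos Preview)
The paper does not supply its own proof of this lemma; it is quoted from C\'ordoba--C\'ordoba \cite{cor:cor} and Ju \cite{ju:qgattract}. Your sketch is exactly the argument found in those references: write $\Lam^s$ via its singular-integral kernel (valid for $0<s<2$), symmetrize in $x\leftrightarrow y$, and reduce to the pointwise inequality
\[
(a-b)\bigl(|a|^{p-2}a-|b|^{p-2}b\bigr)\ \ge\ \tfrac{2}{p}\bigl(|a|^{p/2}-|b|^{p/2}\bigr)^2,\qquad a,b\in\R,\ p\ge 2.
\]
Your reductions (homogeneity, symmetry, and the observation that the opposite-sign case dominates the same-sign case with the right-hand side unchanged) are correct, and you are right that the sharp constant is $4(p-1)/p^2\ge 2/p$.

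Two small points. First, the one place where the sketch is genuinely thin is the same-sign one-variable inequality: the Taylor expansion at $t=1$ gives only local information, so you still owe a global argument for $t>1$ (for instance, write both sides as $\bigl(\int_1^t r^{q}\,dr\bigr)^2$-type quantities and apply Cauchy--Schwarz, as in Ju's proof, which gives the sharp constant directly). Second, the kernel representation restricts you to $0<s<2$, which is the only range used in the paper; the endpoint $p=\infty$ in the hypothesis is not meaningful as stated and is never invoked.
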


\subsection{Besov spaces}
Let $s\in\R$, $1\leq p,q\leq\infty$.  The \textit{homogeneous Besov space} $\hbes$ is the space defined by
	\begin{align}\label{besov}
		\hbes:=\{f\in\cZ'(\R^d):\Sob{f}{\hbes}<\infty\},
	\end{align}
where $\cZ'(\R^d)$ denotes the dual space of $\cZ(\R^d):=\{f\in\Sch(\R^d):\bdy^\be\hat{f}(0)=0,\forall\be\in\N^d\}$, and the norm is given by
	\begin{align}\label{besov:norm}
		\Sob{f}{\hbes}:=\left(\sum_{j\in\Z}2^{jsq}\Sob{\lpj f}{L^p}^q\right)^{1/q},
	\end{align}
for $1\leq q<\infty$.  One makes the usual modification for $q=\infty$.  For more details, see \cite{bcd} or \cite{runst:sick}.

\subsection{Gevrey operator and related spaces}
Let $0<\al\leq1$ and $\gam>0$.  We denote the \textit{Gevrey operator} by the linear multiplier operator $T_{G_\gam}=\F^{-1}G_\gam\F$ where
	\begin{align}\label{gevrey:op}
		G_\gam(\xi):=\exp(\gam\no{\xi}^\al),
	\end{align}
where $\no{\hspace{0.02in}\cdotp}$ denotes the two-dimensional Euclidean norm.  Note that this notation is not to be confused with $f_\lam$ as defined in \req{dilations}, though the meaning of this notation will be clear from the context.  For convenience, we write the multiplier operator given by $T_{G_\gam}f$ simply as $G_\gam f$ or $\til{f}:=G_\gam f$.

We say that a function $f$ is \textit{Gevrey regular} if
	\begin{align}\label{gev:reg}
		\Sob{G_\gam f}{\hbes}<\infty,
	\end{align}
for some $\gam>0$.  Note that for $p=q=2$, one has the usual notion of Gevrey regularity.

We will establish existence of Gevrey regular solutions to \req{qg} in the following space
	\begin{align}\label{exist:space}
		X_{T}:=\{v\in C((0,T];\Besb(\R^2)):\Sob{v}{X_T}<\infty\},
	\end{align}
for some $0<T\leq\infty$, where $2\leq p<\infty$, $0<\kap\leq1$, and 
	\begin{align}
		\Sob{v}{X_T}:=\sup_{0<t\leq T}t^{\be/\kap}\Sob{G_\gam v(\ \cdotp,t)}{\Besb},
	\end{align}
and any $\be<\kap/2$.

\section{Main Results}\label{sect:results}
\begin{thm}\label{main:thm}
Let $2\leq p<\infty$, $1\leq q\leq\infty$, and $0<\al<\kap<1$.  Suppose $\tht_0\in\Bessig(\R^2)$, where $\s:=1+2/p-\kap$.  Then there exists $T^*<\infty$ and $\tht\in C([0,T^*);B^\s_{p,q}(\R^2))$ such that $\tht$ satisfies \req{qg} and
	\begin{align}
		\Sob{\tht(\ \cdotp)}{X_T}\ls\Sob{\tht_0}{\Bessig},
	\end{align}
for any $\be<\kap/2$. 
Moreover, there exists $C>0$ such that if $\Sob{\tht_0}{\Bessig}\leq C$, then $T^*=\infty$. 
\end{thm}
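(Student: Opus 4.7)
The plan is to close an a priori estimate in the $X_T$ norm and then produce the solution by a standard Picard iteration in the complete metric space defined by that norm. Because the initial data only lie in the critical Besov space $\Bessig$, the strategy is to let the Gevrey parameter vanish at $t=0$, i.e. take $\gam=\gam(t)=\lam t^{\al/\kap}$, so that $G_\gam$ reduces to the identity at $t=0$ and the parabolic smoothing inherent in $\Lam^\kap$ can absorb the growth of the exponential symbol. The extra $t^{\be/\kap}$ weight in the definition of $\Sob{\cdot}{X_T}$ then buys the $\be$ derivatives that separate $\Besb$ from $\Bessig$, exactly as in the usual Chemin-type scaling for the Navier--Stokes equations.

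First I would apply $G_\gam\lpj$ to \req{qg} and denote $\til\tht_j:=G_\gam\lpj\tht$. Multiplying the resulting equation by $\abs{\til\tht_j}^{p-2}\til\tht_j$, integrating in $x$, and invoking Lemma \ref{pos:lem} to extract a genuine dissipation term $(2/p)\no{\Lam^{\kap/2}\abs{\til\tht_j}^{p/2}}_{L^2}^2$, then using Lemma \ref{gen:bern} to recognize this as comparable to $2^{j\kap}\no{\til\tht_j}_{L^p}^p$, yields
\begin{align*}
\tfrac{1}{p}\ddt\no{\til\tht_j}_{L^p}^p+c\,2^{j\kap}\no{\til\tht_j}_{L^p}^p
\ls \gam'(t)\no{\Lam^\al\til\tht_j}_{L^p}\no{\til\tht_j}_{L^p}^{p-1}
+ \bigl|\langle G_\gam\lpj(u\cdotp\del\tht),\,\abs{\til\tht_j}^{p-2}\til\tht_j\rangle\bigr|.
\end{align*}
The term containing $\gam'(t)$ is controlled by choosing $\lam$ small and using $\al<\kap$, so that the $\Lam^\al$-factor can be absorbed by the dissipation after a Bernstein exchange.

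The heart of the argument is the nonlinear term. I would write
\[
G_\gam\lpj(u\cdotp\del\tht)=\lpj(\til u\cdotp\del\til\tht)+\lpj\bigl(G_\gam(u\cdotp\del\tht)-\til u\cdotp\del\til\tht\bigr),
\]
and treat the first piece using the bilinear multiplier estimate Theorem \ref{axis:thm} together with the commutator decomposition of Theorem \ref{main:comm:thm}, which is designed precisely to replace the Miura-type commutator estimates of \cite{miura} in the Gevrey setting. Paraproduct decomposing $\lpj(\til u\cdotp\del\til\tht)$ into low-high, high-low and high-high pieces and invoking the $L^p\times L^q\to L^r$ bounds of Theorem \ref{axis:thm}, combined with Bernstein and the boundedness of the Riesz transform, gives a bound of the form $\ls 2^{j(1+2/p)}\no{\til\tht_j}_{L^p}^{p-1}\sum_{k}c_{jk}\no{\til\tht_k}_{L^p}$ with a summable matrix $c_{jk}$. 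Dividing by $\no{\til\tht_j}_{L^p}^{p-1}$, multiplying by $2^{j(\s+\be)}$, taking the $\ell^q$ norm, and multiplying by $t^{\be/\kap}$ produces, after absorbing the $\gam'$ and commutator terms into the dissipation, a closed inequality of the form
\begin{equation*}
\Sob{\tht}{X_T}\ls \Sob{\tht_0}{\Bessig}+\Sob{\tht}{X_T}^2,
\end{equation*}
where the implicit constants are uniform in $T$.

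From this quadratic inequality the two conclusions of the theorem follow by standard arguments: for arbitrary data we may still choose $T^*>0$ so small that the local-in-time version (which has an extra vanishing factor as $T\decto 0$, coming from the continuity of translations in $\Bessig$) can be solved by contraction in $X_{T^*}$; if $\Sob{\tht_0}{\Bessig}$ is smaller than a computable constant $C$, the quadratic inequality itself yields $T^*=\infty$. The main obstacle in the plan is not the iteration but controlling the commutator $G_\gam(u\cdotp\del\tht)-\til u\cdotp\del\til\tht$ in the $L^p$-based Besov framework for $p>2$, since the bilinear symbol $\exp(\gam\no{\xi}^\al)-\exp(\gam\no{\eta}^\al)\exp(\gam\no{\xi-\eta}^\al)$ is not of standard Coifman--Meyer type; this is exactly where the Marcinkiewicz-type structure established in Theorem \ref{axis:thm}, aided by the spectral localization of $\lpj$, becomes essential.
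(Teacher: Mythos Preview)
Your overall plan---energy estimate on $\til\tht_j$, Gronwall, quadratic inequality, Picard---matches the paper, but your handling of the nonlinear term deviates from it and, as written, has a genuine gap.

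The paper does \emph{not} split $G_\gam\lpj(u\cdotp\del\tht)$ as $\lpj(\til u\cdotp\del\til\tht)+\lpj\bigl(G_\gam(u\cdotp\del\tht)-\til u\cdotp\del\til\tht\bigr)$. Instead it writes
\[
G_\gam\lpj(u\cdotp\del\tht)=[G_\gam\lpj,u]\cdotp\del\tht+u\cdotp\del\til\tht_j,
\]
with the \emph{untilded} velocity $u$. After pairing with $\abs{\til\tht_j}^{p-2}\til\tht_j$, the transport piece $u\cdotp\del\til\tht_j$ vanishes by the divergence-free condition, and the commutator $[G_\gam\lpj,u]\cdotp\del\tht$ is exactly the object estimated by Theorem~\ref{main:comm:thm}. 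This cancellation is essential in the supercritical regime: the commutator structure is what buys back the full derivative that $\del\tht$ costs, and without it no paraproduct bookkeeping on $\lpj(\til u\cdotp\del\til\tht)$ alone will close at regularity $\s=1+2/p-\kap$. Your sketch never invokes $\del\cdotp u=0$, and the bound you write, $\ls 2^{j(1+2/p)}\no{\til\tht_j}_{L^p}^{p-1}\sum_k c_{jk}\no{\til\tht_k}_{L^p}$, is precisely the \emph{non}-commutator product estimate, one derivative worse than what is needed. Moreover, Theorem~\ref{main:comm:thm} does not control your second piece $G_\gam(u\cdotp\del\tht)-\til u\cdotp\del\til\tht$; its statement concerns $[G_\gam\lpj,f]g$, a different bilinear object.

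A second, smaller discrepancy: the paper does not simply absorb the $\gam'(t)\Lam^\al\til\tht_j$ term into the dissipation by Bernstein. It applies Lemma~\ref{lem:lin:gev} to split $\Lam^\al\til\tht_j$ into a piece $\ls\Lam^\al\tht_j$ (involving the \emph{non}-Gevrey solution) and a piece $\ls\gam^{\kap/\al-1}\Lam^\kap\til\tht_j$; only the latter is absorbed, while the former is carried as a forcing term and controlled by the separate non-Gevrey a~priori bound established in Part~I of the proof (the analogue of \req{unif:bounds}). Your proposal omits this step and the need for the auxiliary non-Gevrey estimate.
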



The proof of Theorem \ref{main:thm} will make use of the following commutator estimate for Gevrey regular functions.

\begin{thm}\label{main:comm:thm}
Let $1< p<\infty$ and $1\leq q\leq\infty$.  Let $\gam,\de>0$ such that $\de<1$.  Suppose $s,t\in\R$ satisfy the following
	\begin{enumerate}
		\item[(i)]$2/p<s<1+2/p-\de$,
		\item[(ii)] $t<2/p$,
		\item[(iii)] $s+t>2/p$.
	\end{enumerate}
Then there exists $C_j=C_j(\al,\de,\gam)$ such that 
	\begin{align}\notag
		\Sob{[G_{\gam}\lpj,f]g}{L^p(\R^2)}\ls2^{-(s+t-2/p)j}C_j\Sob{G_\gam{f}}{\dot{B}_{p,q}^{s}(\R^2)}\Sob{G_\gam{g}}{\dot{B}^{t}_{p,q}(\R^2)},
	\end{align}
where
	\begin{align}\notag
		C_j:=c_j\left(\gam^{(\al-\de)/\al}2^{(\al-\de)j}+1\right),
	\end{align}
for some $(c_j)_{j\in\Z}\in\ell^q$.
\end{thm}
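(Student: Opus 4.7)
The plan is to apply Bony's paraproduct decomposition to the product $fg$, split the commutator into the standard low-high, high-low, and high-high frequency interactions, recast each resulting piece as a bilinear Fourier multiplier operator acting on $(\til{f},\til{g})=(G_\gam f,G_\gam g)$, and then invoke the bilinear multiplier theorem (Theorem \ref{axis:thm}) after verifying a Marcinkiewicz-type condition with the constant $C_j$. Writing $fg = T_fg + T_gf + R(f,g)$ and subtracting $fG_\gam\lpj g$ yields
	\begin{align*}
		[G_\gam\lpj,f]g &= \sum_{|k-j|\leq 2}\bigl[G_\gam\lpj,\,S_{k-3}f\bigr]\lpk g + G_\gam\lpj T_gf \\
		&\quad + G_\gam\lpj R(f,g) - (f-T_f)G_\gam\lpj g,
	\end{align*}
and substituting $\wh{f}(\zeta)=e^{-\gam|\zeta|^\al}\wh{\til{f}}(\zeta)$ (and similarly for $g$) recasts each piece as a bilinear operator whose symbol factors through the Gevrey defect
	\begin{align*}
		\Ph_\gam(\xi,\eta) := \exp\bigl(\gam[\,|\xi|^\al - |\xi-\eta|^\al - |\eta|^\al\,]\bigr),
	\end{align*}
which is bounded by $1$ thanks to the subadditivity $|\xi|^\al\leq|\xi-\eta|^\al+|\eta|^\al$ for $0<\al\leq 1$.

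For the principal (low-high) commutator, the mean value representation
	\begin{align*}
		\ph_j(\xi)G_\gam(\xi) - \ph_j(\eta)G_\gam(\eta) = (\xi-\eta)\cdotp\int_0^1\del[\ph_jG_\gam](\eta+s(\xi-\eta))\,ds
	\end{align*}
extracts the $(\xi-\eta)$ factor underlying the classical paraproduct Marcinkiewicz symbol of order $2^{-j}$, while the Gevrey derivative $\del G_\gam\ls\al\gam|\zeta|^{\al-1}G_\gam$ contributes a multiplicative correction of order $\gam 2^{j(\al-1)}$, which is the seed of the Gevrey term in $C_j$. The remaining pieces carry simpler symbols: the high-low term $G_\gam\lpj T_gf$ and the spectral tail $(f-T_f)G_\gam\lpj g$ are treated via Bernstein together with $t<2/p$, while the high-high remainder $G_\gam\lpj R(f,g)$, supported on $k\geq j-2$, is summable precisely because $s+t>2/p$.

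The precise form $C_j = c_j(\gam^{(\al-\de)/\al}2^{(\al-\de)j}+1)$ then arises from interpolating between the trivial bound $\Ph_\gam\leq 1$, which produces the classical summand $c_j$, and the Gevrey contribution of size $(\gam 2^{j\al})^{(\al-\de)/\al}$ coming from $\del G_\gam$, with $\de$ playing the role of an interpolation index. Applying Theorem \ref{axis:thm} to each symbol and summing over the admissible indices $(k,\ell)$ against the Besov sequences of $\til{f}\in\dot{B}^s_{p,q}$ and $\til{g}\in\dot{B}^t_{p,q}$ produces the $\ell^q$ sequence $(c_j)$ and the scaling $2^{-(s+t-2/p)j}$. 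The hypotheses on $s,t$ enter as follows: $s>2/p$ controls $\Sob{\del S_{k-3}\til{f}}{L^\infty}$ in the low-high regime, $s<1+2/p-\de$ preserves the frequency scaling of the Gevrey-corrected paraproduct commutator, and $s+t>2/p$ governs the high-high remainder.

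The main obstacle is the quantitative verification of the Marcinkiewicz condition with the precise constant $C_j$. The naive estimate using $\Ph_\gam\leq 1$ and $|\del G_\gam|\ls\gam 2^{j(\al-1)}$ would produce a Gevrey loss of order $\gam 2^{j\al}$, which is far too large to be absorbed later in the Besov framework. Recovering the smaller loss $\gam^{(\al-\de)/\al}2^{(\al-\de)j}$ demands the interpolation described above, which is what ultimately allows this commutator estimate to close the \emph{a priori} bound for the SQG equation in the supercritical regime $\al<\kap$.
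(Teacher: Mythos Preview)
Your overall framework matches the paper's: paraproduct decomposition, recasting each piece as a bilinear multiplier acting on $(\til f,\til g)$, the mean value representation on the low--high commutator splitting the symbol into a ``classical'' part $m_B$ (derivative hits $\ph_j$) and a ``Gevrey'' part $m_A$ (derivative hits $G_\gam$), and then Theorem~\ref{axis:thm}. The gap is precisely at the step you yourself flag as the obstacle, and your proposed resolution does not work.

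You only use $\Ph_\gam\leq 1$, and then appeal to an unspecified ``interpolation'' between this and the naive bound $\gam 2^{j\al}$ to obtain $\gam^{(\al-\de)/\al}2^{(\al-\de)j}$. But these are two upper bounds on the \emph{same} quantity; no interpolation of bounds can produce a third bound smaller than either input. What the paper actually exploits is that on the low--high piece, with $\no{\xi}\sim 2^\ell$ (the low frequency coming from $S_kf=\sum_{\ell\leq k-3}\lpl f$) and $\no{\eta}\sim 2^k\sim 2^j$, the strict concavity of $\no{\,\cdotp}^\al$ (Lemma~\ref{concavity}) forces the Gevrey defect to be \emph{strictly} negative, $R_{\al,\s}(\xi,\eta)\leq -c_\al 2^{\ell\al}$, so that $e^{\gam R_{\al,\s}}\leq e^{-c_\al\gam 2^{\ell\al}}$. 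The $m_A$ symbol therefore carries the factor $\gam\,2^{j(\al-1)}e^{-c_\al\gam 2^{\ell\al}}$, and the elementary inequality $x^{\de/\al}e^{-cx}\ls 1$ with $x=\gam 2^{\ell\al}$ converts it to $\gam^{1-\de/\al}\,2^{j(\al-1)}\,2^{-\ell\de}$. The crucial point is that the gain $2^{-\ell\de}$ sits on the \emph{low} frequency $\ell$, not on $j$: it is absorbed into the sum $\sum_{\ell\leq k-3}$ and is exactly what tightens the summability requirement from $s<1+2/p$ to $s<1+2/p-\de$, after which the remaining $j$-factors assemble into $\gam^{(\al-\de)/\al}2^{(\al-\de)j}$. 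Without the strict negativity from Lemma~\ref{concavity} (and the analogous triangle-inequality bound in the high--high regime), your symbol estimates stall at $\Ph_\gam\leq 1$ and the argument cannot close. A secondary point: your attribution of the hypotheses is slightly off --- in the paper, $s>2/p$ is used not for the low--high commutator but for the tail term $\sum_{k\geq j+1}\lpk f\,\lpj S_k\til g$, while $s<1+2/p-\de$ is what governs the low--high piece via the mechanism just described.
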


As a corollary of the proof of Theorem \ref{main:comm:thm}, we extend the commutator estimate of Miura (cf. \cite{miura}) to homogeneous Besov spaces.

\begin{cor}\label{comm:besov}
Suppose that $p,q$ satisfy the conditions of Theorem \ref{main:comm:thm} with $\de=0$.  Then there exists $(c_j)_{j\in\Z}\in\ell^q$ such that
	\begin{align}\notag
		\Sob{[\lpj,f]g}{L^p(\R^2)}\ls2^{-(s+t-2/p)j}c_j\Sob{{f}}{\dot{B}_{p,q}^{s}(\R^2)}\Sob{{g}}{\dot{B}^{t}_{p,q}(\R^2)}.
	\end{align}
\end{cor}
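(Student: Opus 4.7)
The final claim is exactly the conclusion of Theorem \ref{main:comm:thm} specialized to $\gam = 0$ and $\de = 0$: the Gevrey operator $G_\gam$, having symbol $\exp(\gam\no{\xi}^\al)$, reduces to the identity when $\gam = 0$, and the constant $C_j = c_j(\gam^{(\al-\de)/\al} 2^{(\al-\de)j} + 1)$ collapses to $c_j$. Hypotheses (i)--(iii) reduce to $2/p < s < 1 + 2/p$, $t < 2/p$, and $s + t > 2/p$, which are precisely the conditions imposed in the corollary. Consequently, the plan is to re-examine the proof of Theorem \ref{main:comm:thm} at this specialization and verify that every estimate passes through cleanly.

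The starting point is the Bony paraproduct splitting
\begin{align*}
fg = T_f g + T_g f + R(f,g),
\end{align*}
from which one decomposes
\begin{align*}
[\lpj, f]g = \bigl(\lpj T_f g - T_f \lpj g\bigr) + \bigl(\lpj T_g f - T_{\lpj g} f\bigr) + \bigl(\lpj R(f,g) - R(f, \lpj g)\bigr).
\end{align*}
Due to spectral localization in the first piece, only indices $k$ with $|k-j| \leq N$ for a fixed $N$ contribute, and after reindexing the sum reduces to commutators of the form $[\lpj, S_{k-3} f] \lpk g$. These are treated via the classical kernel identity
\begin{align*}
[\lpj, S_{k-3}f]\lpk g(x) = \int \check{\ph}_j(y)\bigl[S_{k-3}f(x-y) - S_{k-3}f(x)\bigr]\lpk g(x-y)\,dy,
\end{align*}
from which the mean value theorem together with $\int |y| |\check{\ph}_j(y)|\,dy \ls 2^{-j}$ produces a gain of $2^{-j} \no{\del S_{k-3} f}_{L^\infty}$. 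Condition (i) on $s$, combined with Bernstein to pass from $L^\infty$ to $L^p$ on each $\lpk f$, then makes the resulting sum convergent. The second piece is handled analogously, with the spectral support of $S_{k-3} \lpj g$ forcing $k > j$, for which the negative range of $t$ in condition (ii) supplies summability. The third piece is treated directly by Hölder and Bernstein.

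The main obstacle lies in this third piece: $\lpj R(f,g)$ picks up contributions from all frequency scales $k \geq j - N$ (the high-high-to-low interaction), so that controlling it in $L^p$ requires exactly the threshold $s + t > 2/p$ of condition (iii) together with careful $\ell^q$-bookkeeping of the Littlewood-Paley norms of $f$ and $g$ to produce the desired $(c_j)_{j\in\Z} \in \ell^q$. Since the Gevrey operator enters the proof of Theorem \ref{main:comm:thm} only as a multiplicative pointwise factor on the frequency side that generates the extra contribution $\gam^{(\al-\de)/\al} 2^{(\al-\de)j}$ in $C_j$, setting $\gam = 0$ kills that term and simplifies each estimate without disturbing the underlying paraproduct structure, yielding the corollary.
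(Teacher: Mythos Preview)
Your opening observation is exactly the paper's position: the corollary is stated as an immediate consequence of the proof of Theorem~\ref{main:comm:thm}, obtained by setting $\gam=0$ (so $G_\gam=\Id$) and $\de=0$, whereupon $C_j$ collapses to $c_j$. No separate argument is given in the paper.

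Where your sketch diverges is in the treatment of the key commutator piece $[\lpj,S_kf]\lpk g$. You invoke the physical-space kernel identity
\[
[\lpj,S_kf]\lpk g(x)=\int\check\ph_j(y)\bigl[S_kf(x-y)-S_kf(x)\bigr]\lpk g(x-y)\,dy
\]
and gain a factor $2^{-j}$ from $\int|y|\,|\check\ph_j(y)|\,dy$. This is the classical route (essentially Miura's), and it is perfectly valid for the corollary on its own. But it is \emph{not} what the paper does in proving Theorem~\ref{main:comm:thm}: there the mean value theorem is applied on the Fourier side to the symbol $G_\gam(\xi+\eta)\ph_j(\xi+\eta)-G_\gam(\eta)\ph_j(\eta)$, the resulting symbols $m_A,m_B$ are shown to satisfy the Marcinkiewicz-type bound \eqref{hm:cond}, and Theorem~\ref{axis:thm} is invoked. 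So your claim to be ``re-examining the proof of Theorem~\ref{main:comm:thm}'' is not accurate; you are describing a different, more elementary argument. The payoff of the paper's multiplier approach is that it survives the presence of $G_\gam$: the kernel of $G_\gam\lpj$ does not enjoy the uniform first-moment bound your method needs, which is why the bilinear multiplier framework was built in the first place. Conversely, your approach avoids the machinery of Theorem~\ref{axis:thm} and Proposition~\ref{R:est} entirely, giving a self-contained proof of the corollary.

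One minor slip: you refer to ``the negative range of $t$'' for the second piece, but hypothesis~(ii) is $t<2/p$, not $t<0$; the summability there comes from $2/p-t>0$ (compare the paper's use of $2/p^*-t>0$ in \eqref{ksimj:nocomm}).
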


In order to prove Theorem \ref{main:comm:thm}, we view the commutator as a bilinear multiplier operator and show that its symbol satisfies 
	\begin{align}\label{hm:cond}
		\left|\bdy_\xi^{\be_1}\bdy_ \eta^{\be_2}m(\xi,\eta)\right|\ls_\be \no{\xi}^{-|\be_1|}\no{\eta}^{-|\be_2|},
	\end{align}  
In other words, we show that $m$ is of Marcinkiewicz type.  Note that condition \req{hm:cond} is weaker than that of Coifman-Meyer (cf. \cite{coif:mey}).  We must remark at this point that in general, such multipliers need not map $L^p\times L^q$ into $L^r$ \textit{for any} $1<p,q<\infty$ and $1/r=1/p+1/q$ (cf. \cite{graf:kalt}).  This can be remedied by logarithmically strengthing \req{hm:cond} as Grafakos and Kalton demonstrated in \cite{graf:kalt}.  However, the fact that we work with Besov spaces provides additional localizations, which greatly simplify the situation.

\begin{thm}\label{axis:thm}
Suppose $m:\R^d\times\R^d\goesto\R$ satisfies \req{hm:cond} for sufficiently many multi-indices $|\be|\geq0$ with $\be=\be_1+\be_2$, and that $\spt m(\xi,\ \cdotp)\sub[1/2\ls\no{\eta}\ls 2]$, for all $\xi\in\R^d$.  Then for all $1<p<\infty$, $1\leq q\leq\infty$ such that $1/r=1/p+1/q$, the associated bilinear multiplier operator $T_m:L^p(\R^d)\times L^q(\R^d)\goesto L^r(\R^d)$ satisfies
	\begin{align}\notag
		\Sob{T_m(f,g)}{L^r}\ls\Sob{f}{L^p}\Sob{g}{L^q}.
	\end{align}
\end{thm}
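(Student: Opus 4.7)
The plan is to exploit the $\eta$-support of $m$ in a fixed annulus to expand $m$ as a Fourier series in $\eta$ on a containing cube. This reduces $T_m$ to a rapidly convergent sum of products of linear H\"ormander-Mikhlin multipliers applied to $f$ against translates of a regularized version of $g$, which can then be bounded by the classical H\"ormander-Mikhlin theorem and H\"older's inequality.

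Concretely, I would fix a smooth cutoff $\ph\in C_c^\infty(\R^d)$ with $\ph\equiv 1$ on a neighborhood of $[1/2\ls\no{\eta}\ls 2]$ and $\spt\ph$ contained strictly inside a cube $Q:=[-R,R]^d$ for some $R>0$, then expand $m(\xi,\cdot)$ as a Fourier series on $Q$:
\begin{align*}
m(\xi,\eta)=\ph(\eta)\sum_{k\in\Z^d}a_k(\xi)e^{i\pi k\cdot\eta/R},\qquad a_k(\xi):=(2R)^{-d}\int_Q m(\xi,\eta)e^{-i\pi k\cdot\eta/R}\,d\eta.
\end{align*}
Repeated integration by parts in $\eta$, where the compact support of $m(\xi,\cdot)$ strictly inside $Q$ kills the boundary terms and the Marcinkiewicz condition \req{hm:cond} combined with $\no{\eta}\sim 1$ on the support controls the $\eta$-derivatives, yields the key coefficient estimate
\begin{align*}
\abs{\bdy_\xi^{\be_1}a_k(\xi)}\ls(1+\abs{k})^{-N}\no{\xi}^{-\abs{\be_1}}
\end{align*}
for any desired $N$, provided \req{hm:cond} holds for enough $\be_2$-derivatives. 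In particular, each $a_k$ is a H\"ormander-Mikhlin multiplier with seminorm $\ls(1+\abs{k})^{-N}$.

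Fourier inversion then gives the pointwise factorization
\begin{align*}
T_m(f,g)(x)=\sum_{k\in\Z^d}(T_{a_k}f)(x)\cdot g_\ph\!\left(x+\tfrac{\pi k}{R}\right),
\end{align*}
where $g_\ph:=\check{\ph}\ast g$ satisfies $\Sob{g_\ph}{L^q}\ls\Sob{g}{L^q}$ uniformly in $q\in[1,\infty]$ by Young's inequality, since $\check{\ph}$ is Schwartz. Applying the classical H\"ormander-Mikhlin theorem to each $T_{a_k}$ on the range $1<p<\infty$ and then H\"older's inequality in $L^r$ with $1/r=1/p+1/q$ to each summand, one obtains
\begin{align*}
\Sob{T_m(f,g)}{L^r}\ls\sum_k(1+\abs{k})^{-N}\Sob{f}{L^p}\Sob{g}{L^q},
\end{align*}
which sums to the desired bound upon choosing $N$ large enough. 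In the quasi-Banach case $r<1$ (only relevant when $q=1$ and $p$ is small), one uses the $r$-triangle inequality $\Sob{\sum f_k}{L^r}^r\leq\sum\Sob{f_k}{L^r}^r$ before summing.

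The main obstacle is verifying the coefficient estimate on $\bdy_\xi^{\be_1}a_k$ with the prescribed joint decay in $k$ and $\xi$: this is exactly where the Marcinkiewicz hypothesis \req{hm:cond} must be used in both variables simultaneously, and where the annular $\eta$-localization is essential, since it both renders the apparently singular factor $\no{\eta}^{-\abs{\be_2}}$ harmless (bounded away from $0$ and $\infty$) and permits integration by parts without boundary contributions. The number of $\eta$-derivatives of $m$ needed to achieve sufficiently large $N$ dictates the ``sufficiently many multi-indices'' clause in the hypothesis; the endpoints $q=1,\infty$ require no additional argument beyond what is given above.
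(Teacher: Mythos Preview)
Your proposal is correct and follows essentially the same approach as the paper: expand $m(\xi,\cdot)$ in a Fourier series on a cube containing the $\eta$-annulus, use integration by parts together with the Marcinkiewicz condition to show the coefficients are H\"ormander--Mikhlin multipliers in $\xi$ with rapid decay in $k$, then factor $T_m$ as a sum of linear multipliers on $f$ times translates of $\check\ph*g$ and conclude via H\"older and Young. The only substantive difference is that you explicitly address the quasi-Banach range $r<1$ with the $r$-triangle inequality, a point the paper's appeal to Minkowski glosses over.
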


A prototypical example of a bilinear operator satisfying \req{hm:cond} is $T(f,g)=Hf\cdotp Hg$, where $H$ is the Hilbert transform.  Indeed, boundedness would then follow from H\"older's inequality.  The role of the smooth localization in $\eta$ in Theorem \ref{axis:thm} is that it allows us to treat the bilinear multiplier as a linear multiplier, and effectively reduce the situation to the simpler case of $Hf\cdotp Hg$.  Thus, Besov spaces provide an easy setting with which to work with bilinear Marcinkiewicz multipliers.

The proof of Theorem \ref{axis:thm} is elementary and relies on classical techniques.  We relegate its proof to the Appendix, while the proofs of Theorems \ref{main:thm} and  \ref{main:comm:thm} can be found in Sections \ref{sect:apriori} and \ref{sect:comm}, respectively.

\section{Preliminary estimates}
The following heat kernel estimate was proved in \cite{miura} for $L^2$.  We extend it to $L^p$ here.
\begin{lem}\label{lem:heat:ker}
Let $2\leq p<\infty$.  Then there exist a constants $c_1,c_2>0$ such that
	\begin{align}
		e^{-c_1t2^{\kap j}}\Sob{\lpj u}{L^p}\leq\Sob{e^{-t\Lam^\kap}\lpj u}{L^p}\leq e^{-c_2t2^{\kap j}}\Sob{\lpj u}{L^p},
	\end{align}
holds for all $t>0$.
\end{lem}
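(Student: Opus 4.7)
The plan is to prove both inequalities by the same ODE-in-time energy argument. Set $v(t):=e^{-t\Lam^\kap}\lpj u$, so that $\bdy_t v+\Lam^\kap v=0$, $v(0)=\lpj u$, and $v(t)$ retains the spectral localization of $\lpj u$ in $\spt\ph(\cdotp/2^j)$ for all $t\geq 0$, since $e^{-t\Lam^\kap}$ is a radial Fourier multiplier that commutes with $\lpj$. Multiplying the equation by $\abs{v}^{p-2}v$ and integrating yields
\begin{align*}
\frac{1}{p}\frac{d}{dt}\Sob{v}{L^p}^p=-\int_{\R^2}(\Lam^\kap v)\abs{v}^{p-2}v\,dx,
\end{align*}
so both bounds reduce to controlling this integral and integrating the resulting differential inequality; the spectral localization of $v$ will supply the factor $2^{\kap j}$ in either direction.

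For the upper bound, the positivity lemma (Lemma \ref{pos:lem}) with $s=\kap$ gives $\int(\Lam^\kap v)\abs{v}^{p-2}v\,dx\geq\tfrac{2}{p}\Sob{\Lam^{\kap/2}\abs{v}^{p/2}}{L^2}^2$, and the generalized Bernstein inequality (Lemma \ref{gen:bern}) with $s=\kap/2$ applied to the spectrally localized $v$ produces $\Sob{\Lam^{\kap/2}\abs{v}^{p/2}}{L^2}^2\gs 2^{\kap j}\Sob{v}{L^p}^p$. Combining these gives $\frac{d}{dt}\Sob{v}{L^p}^p\leq-c\,2^{\kap j}\Sob{v}{L^p}^p$, and Gr\"onwall delivers $\Sob{v(t)}{L^p}^p\leq e^{-c\,t2^{\kap j}}\Sob{\lpj u}{L^p}^p$; taking $p$-th roots yields the upper bound with $c_2:=c/p$.

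For the lower bound, one controls the same integral from above by a simpler H\"older/Bernstein combination,
\begin{align*}
-\int(\Lam^\kap v)\abs{v}^{p-2}v\,dx\geq-\Sob{\Lam^\kap v}{L^p}\Sob{v}{L^p}^{p-1}\geq-C\,2^{\kap j}\Sob{v}{L^p}^p,
\end{align*}
where the classical Bernstein inequality (Lemma \ref{bern}) is applied to the spectrally localized $v$. Hence $\frac{d}{dt}\Sob{v}{L^p}^p\geq-pC\,2^{\kap j}\Sob{v}{L^p}^p$, and Gr\"onwall produces the matching lower bound with $c_1:=C$ after taking $p$-th roots. The only subtle point is verifying that the spectral support of $v(t)$ is stable in $t$ so that the Bernstein-type inequalities apply; as noted above this is immediate since $e^{-t\Lam^\kap}$ is a Fourier multiplier. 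The restriction $p\geq 2$ enters only in the upper bound, since the positivity lemma and the generalized Bernstein inequality both require it; the lower bound argument works for any $1\leq p\leq\infty$.
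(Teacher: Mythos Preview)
Your proof is correct and follows essentially the same approach as the paper: the energy identity obtained by multiplying the heat equation by $\abs{v}^{p-2}v$, then the positivity lemma combined with the generalized Bernstein inequality for the upper bound, and H\"older plus classical Bernstein for the lower bound, followed by Gr\"onwall. The only cosmetic difference is that the paper divides through by $\Sob{v}{L^p}^{p-1}$ before integrating (working with $\Sob{v}{L^p}$ directly), whereas you work with $\Sob{v}{L^p}^p$ and take $p$-th roots at the end.
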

\begin{proof}
Let $u_j:=e^{-t\Lam^\kap}\lpj u$.  Then $u_j$ satisfies the initial value problem
	\begin{align}\label{heat:eqn}
		\begin{cases}
			\bdy_tu_j+\Lam^\kap u_j=0\\
				u_j(x,0)=\lpj u(x).
		\end{cases}
	\end{align}
Multipying \req{heat:eqn} by $u_j|u_j|^{p-2}$ and integrating gives
	\begin{align}\notag
		\frac{1}p\frac{d}{dt}\Sob{u_j}{L^p}^p+\int (\Lam^\kap u_j)u_j|u_j|^{p-2}\ dx=0.
	\end{align}
Then by applying Lemmas \ref{gen:bern} and \ref{pos:lem}, then dividing by $\Sob{u_j}{L^p}^{p-1}$ we obtain
	\begin{align}\notag
		\frac{d}{dt}\Sob{u_j}{L^p}+c_12^{\kap j}\Sob{u_j}{L^p}\leq0,
	\end{align}
Similarly, by H\"older's inequality we obtain
	\begin{align}\notag	
		\frac{d}{dt}\Sob{u_j}{L^p}+c_22^{\kap j}\Sob{u_j}{L^p}\geq0.
	\end{align}
An application of Gronwall's inequality gives
	\begin{align}
		e^{-c_22^{\kap j}t}\Sob{u_j(0)}{L^p}\leq\Sob{u_j(t)}{L^p}\leq e^{-c_12^{\kap j}t}\Sob{u_j(0)}{L^p},
	\end{align}
which completes the proof.
\end{proof}

We will require the following estimates on the Gevrey operator.

\begin{lem}\label{lem:lin:gev}
Let $0<\al<\kap$ and $1\leq p\leq\infty$.  If $\Lam^\al f, G_\gam\Lam^\kap f\in L^p$, then
	\begin{align}\label{lin:gev}
		\Sob{G_\gam\Lam^\al \lpj f}{L^p}\ls \Sob{\Lam^\al \lpj f}{L^p}+\gam^{-(1-\kap/\al)}\Sob{G_\gam\Lam^\kap\lpj f}{L^p},
	\end{align}
for all $j\in\Z$.
\end{lem}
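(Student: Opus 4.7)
The plan is to split the frequency range into two regimes according to the critical threshold $2^j \sim \gam^{-1/\al}$. This threshold is dictated by the elementary pointwise inequality
\[
e^{\gam y^\al}y^\al \ls y^\al + \gam^{\kap/\al - 1} e^{\gam y^\al} y^\kap, \qquad y \geq 0,
\]
in which the first term on the right dominates when $\gam y^\al \ls 1$ and the second when $\gam y^\al \gs 1$. Transferring this pointwise heuristic to the $L^p$ level requires a Fourier multiplier argument in each regime.

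In the low-frequency regime $\gam 2^{\al j}\ls 1$, since $\lppj \lpj = \lpj$, I would write $G_\gam \Lam^\al \lpj f = (G_\gam \lppj)(\Lam^\al \lpj f)$, so that it suffices to show that the multiplier operator $G_\gam \lppj$ is bounded on $L^p$ uniformly in $j$ and $\gam$ throughout this regime. Rescaling $\xi = 2^j \eta$, its symbol becomes a smooth function supported in $\no{\eta}\sim 1$ times $\exp\bigl((\gam 2^{\al j})\no{\eta}^\al\bigr)$; since $\gam 2^{\al j}$ stays in a bounded set, the rescaled symbol has uniform control on all derivatives. Its inverse Fourier transform therefore lies in $L^1$ with a uniform bound, and rescaling back preserves the $L^1$ kernel norm. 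Young's inequality then yields $\Sob{G_\gam \lppj g}{L^p} \ls \Sob{g}{L^p}$ uniformly, which gives the first term on the right-hand side of \req{lin:gev}.

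In the high-frequency regime $\gam 2^{\al j}\gs 1$, I would write $G_\gam \Lam^\al \lpj f = \Lam^{\al - \kap}\bigl(G_\gam \Lam^\kap \lpj f\bigr)$ and exploit the fact that $G_\gam \Lam^\kap \lpj f$ has Fourier support in $\{\no{\xi}\sim 2^j\}$. Bernstein's inequality (Lemma \ref{bern}) applied with the negative exponent $\al - \kap$ yields
\[
\Sob{G_\gam \Lam^\al \lpj f}{L^p} \ls 2^{(\al - \kap)j}\Sob{G_\gam \Lam^\kap \lpj f}{L^p}.
\]
The threshold condition $2^j \gs \gam^{-1/\al}$ together with $\al < \kap$ then gives $2^{(\al - \kap)j} \ls \gam^{(\kap - \al)/\al} = \gam^{-(1 - \kap/\al)}$, producing exactly the second term on the right-hand side of \req{lin:gev}. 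Adding the two regimes completes the bound.

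The main technical point is the uniform $L^p$-boundedness of the multiplier in the low-frequency regime; the rescaling argument is essential here so as to obtain a bound that is uniform in $j$ and $\gam$ \emph{simultaneously}, and not merely for each parameter separately.
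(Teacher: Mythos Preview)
Your proposal is correct and follows essentially the same strategy as the paper: split at the frequency threshold $2^j\sim\gam^{-1/\al}$, control the low-frequency piece by a uniform $L^1$ kernel bound on the Gevrey multiplier localized to that scale, and control the high-frequency piece via Bernstein with the negative exponent $\al-\kap$. The only organizational difference is that the paper carries out a single decomposition $G_\gam\Lam^\al\lpj f=G_\gam S_k(\Lam^\al\lpj f)+\Lam^{-(\kap-\al)}(I-S_k)\lpj(G_\gam\Lam^\kap\til{\lp}_jf)$ with a fixed cutoff at level $N=2^{k-3}\sim\gam^{-1/\al}$ (valid for all $j$, with a short case analysis on the relative position of $j$ and $k$), whereas you perform a direct dichotomy on $j$; and the paper bounds $\Sob{G_\gam S_k}{L^1}$ by expanding the exponential and applying Bernstein termwise, whereas you use the equivalent rescaling argument. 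These are cosmetic differences; the substance is the same.
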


\begin{proof}
Fix an integer $k$, to be chosen later, such that $N:=2^{k-3}$.  Denote by $\til{\lp}_j$ the augmented operator $\lp_{j-1}+\lpj+\lp_{j+1}$.  Observe that
	\begin{align}\notag
		G_\gam\Lam^\al\lpj f=G_\gam S_{k}(\Lam^\al \lpj f)+\Lam^{-(\kap-\al)}(I-S_{k}){\lpj}(G_\gam\Lam^\kap \til{\lp}_j{f}).
	\end{align}
Observe that $G_\gam S_{k}\in L^1$.  Indeed, by Lemma \ref{bern} we have
	\begin{align}\label{lin:gev1}
		\Sob{G_\gam S_{k}}{L^1}&\leq\sum_{n=0}^\infty\frac{\lam^n\gam^n}{n!}\Sob{\Lam^{\al n}S_{k}}{L^1}\leq e^{c\gam 2^{k\al}},
	\end{align}
for some absolute constant $c>0$.  On the other hand, observe that $\check{m}:=\Lam^{-(\kap-\al)}(I-S_{k})\lpj\in\Sch$.  Let $g:=G_\gam\Lam^\kap \til{\lp}_j{f}$.  We consider three cases.

If $2^{j+2}\leq N$, then $g\equiv0$.  If $N\leq 2^{j-2}$, then Lemma \ref{bern} and Young's convolution inequality implies that
	\begin{align}\notag
		\Sob{T_mg}{L^1}\ls 2^{-(\kap-\al)j}\ls N^{-(\kap-\al)},
	\end{align}
where $T_m$ is convolution with $\check{m}$.  Similarly, if $2^{j-1}\leq N\leq 2^{j+1}$, then
	\begin{align}\label{lin:gev2}
		\Sob{T_mg}{L^1}\ls N^{-(\kap-\al)}.
	\end{align}
Therefore, for any $N>0$
	\begin{align}\notag
		\Sob{G_\gam\Lam^\al\lpj f}{L^p}\ls e^{\gam N^\al}\Sob{\Lam^\al\lpj f}{L^p}+N^{-(\kap-\al)}\Sob{G_\gam\Lam^\kap\til{\lp}_j f}{L^p}.
	\end{align}
Finally, choose $k:=[\al^{-1}\log_2(1/\gam)]$, where $[x]$ denotes the greatest integer $\geq x$.  Then $N\sim\gam^{-1/\al}$, which gives \req{lin:gev}.
\end{proof}

Finally, we will require the following estimate for the solution to the linear heat equation \req{heat:eqn}.

\begin{lem}\label{lin:heat:eqn}
Let $\al<\kap$, $\s>0$, and $\be\geq0$ and suppose that $\tht_0\in\Bessig(\R^2)$.  Then for any $T\geq0$ 
	\begin{enumerate}
		\item[(i)] $\Sob{e^{-( \cdotp) \Lam^\kap}\tht_0}{X_T}\ls\Sob{\tht_0}{\Bessig}$,
		\item[(ii)] $\lim_{T\goesto0}\Sob{\tht_0}{X_T}=0.$
	\end{enumerate}
\end{lem}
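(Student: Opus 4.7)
My plan is to reduce everything to a Gevrey-enhanced version of the heat-kernel upper bound in Lemma~\ref{lem:heat:ker}. Reading the Gevrey parameter in $X_T$ as the time-dependent $\gam=\lam t^{\al/\kap}$ (so that $G_\gam$ coincides with the operator $\e$ used elsewhere in the paper), the symbol of $G_\gam e^{-t\Lam^\kap}$ restricted to $\spt\ph_j$ is pointwise dominated by $\exp(\lam t^{\al/\kap}2^{j\al}-c_2 t 2^{j\kap})$. Writing $\lpj G_\gam e^{-t\Lam^\kap}u=K_{j,t}\ast\lpj u$ and controlling $\Sob{K_{j,t}}{L^1}$ by the same frequency-splitting trick already carried out in Lemma~\ref{lem:lin:gev}, Young's convolution inequality upgrades the symbol bound to the operator bound
\[
\Sob{G_\gam e^{-t\Lam^\kap}\lpj u}{L^p}\ls e^{\lam t^{\al/\kap}2^{j\al}-c_2 t 2^{j\kap}}\Sob{\lpj u}{L^p}.
\]

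For (i), I substitute this estimate into the definition of $\Sob{e^{-(\,\cdotp\,)\Lam^\kap}\tht_0}{X_T}$ and factor out the natural $\Bessig$-weight $2^{j\s}\Sob{\lpj\tht_0}{L^p}$ inside each term of the $\ell^q$-sum over $j$. What remains is the scalar weight $t^{\be/\kap}2^{j\be}\exp(\lam t^{\al/\kap}2^{j\al}-c_2 t 2^{j\kap})$. The change of variable $y:=t^{1/\kap}2^j$ collapses this to $y^\be\exp(\lam y^\al-c_2 y^\kap)$, a function of a single variable which, precisely because $\al<\kap$, is bounded on $[0,\infty)$ by a constant depending only on $\lam,\al,\kap,\be$. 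Pulling this constant through the $\ell^q$-sum produces $\Sob{\tht_0}{\Bessig}$ on the right and delivers~(i).

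For (ii), I exploit that the same scalar weight $y^\be\exp(\lam y^\al-c_2 y^\kap)$ vanishes at $y=0$ whenever $\be>0$. Given $\eps>0$, I split the Besov sum at a large dyadic level $J$: since $\sum_j(2^{j\s}\Sob{\lpj\tht_0}{L^p})^q<\infty$, the tail $j>J$ contributes at most $\eps$ uniformly in $t\in(0,T]$ by the bound from~(i), while on the head $j\leq J$ one has $y\leq t^{1/\kap}2^J\goesto 0$ as $T\goesto 0$, so the weight vanishes uniformly in $j\leq J$. In the degenerate case $\be=0$ (with $q<\infty$), I instead approximate $\tht_0$ by a spectrally compactly supported function, apply strong continuity of $e^{-t\Lam^\kap}$ on $\Bes$, and absorb the approximation error through~(i).

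The main technical point is the Gevrey-enhanced heat-kernel estimate at the top; once Lemma~\ref{lem:lin:gev} has been established, that estimate follows by essentially the same splitting and Young argument, so no genuinely new obstacle arises. Everything downstream reduces to the one-variable optimization of $y^\be\exp(\lam y^\al-c_2 y^\kap)$.
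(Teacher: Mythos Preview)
Your argument for (i) is essentially the paper's: both of you establish the block-wise bound
\[
\Sob{G_\gam e^{-t\Lam^\kap}\lpj\tht_0}{L^p}\ls e^{c_1\lam t^{\al/\kap}2^{j\al}-c_2t2^{j\kap}}\Sob{\lpj\tht_0}{L^p}
\]
via the localization idea of Lemma~\ref{lem:lin:gev} together with Lemma~\ref{lem:heat:ker}, and then exploit that $\al<\kap$ makes the remaining scalar weight bounded. The paper phrases this last step as absorbing the Gevrey growth back into the heat decay (so that the right side is $\ls\Sob{e^{-c_3t\Lam^\kap}\lpj\tht_0}{L^p}$) and then uses $2^{\be j}e^{-ct2^{j\kap}}\ls t^{-\be/\kap}$; your substitution $y=t^{1/\kap}2^j$ does the same computation in one stroke.

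For (ii) the routes differ. The paper argues by density: approximate $\tht_0$ in $\Bessig$ by a Schwartz function $\tht_0^\eps$, note $\tht_0^\eps\in\Bess$ so the factor $T^{\be/\kap}\Sob{\tht_0^\eps}{\Bess}$ kills the main term as $T\goesto 0$, and control the remainder by $\Sob{\tht_0-\tht_0^\eps}{\Bessig}$ via~(i). Your frequency splitting at a level $J$ is a legitimately different and more self-contained alternative for $\be>0$: it avoids invoking density of $\Sch$ in $\Bessig$ (which is delicate at $q=\infty$), and trades that for the direct observation that $y^\be e^{\lam y^\al-c_2y^\kap}\goesto 0$ as $y\goesto 0$. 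One caveat: your fallback for the degenerate case $\be=0$ via ``strong continuity of $e^{-t\Lam^\kap}$'' does not actually give $\Sob{\cdot}{X_T}\goesto 0$, since the limit as $t\goesto 0$ is $\Sob{\tht_0}{\Bes}$, not zero; but this is moot because (ii) is simply false for $\be=0$, and only $\be>0$ is ever used in the sequel.
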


\begin{proof}
Observe that for $b<1$, we have $e^{ax^b-cx}\leq1$ for $x>1$ and $e^{ax^b-cx}\ls e^{-cx}$ for $0\leq x\leq1$.  Arguing as in Lemma \ref{lem:lin:gev} and applying Lemma \ref{lin:heat:eqn} we get
	\begin{align}\notag
		\Sob{\e e^{-t\Lam^\kap}\lpj \tht_0}{L^p}\ls e^{c_1\lam t^{\al/\kap}2^{j\al}-c_2t2^{j\kap}}\Sob{\lpj\tht_0}{L^p}\ls\Sob{e^{-c_3t\Lam^\kap}\lpj\tht_0}{L^p}.
	\end{align}
for some $c_1,c_2,c_3>0$.  Then by Lemma \ref{lem:heat:ker} we have
	\begin{align}\label{sch:case}
		\notag\Sob{\e e^{-t\Lam^\kap}\tht_0}{\Bess}^q&=\sum_j2^{(\s+\be)jq}\Sob{ e^{\lam t^{\al/\kap}\Lam^\al-t\Lam^\kap}\lpj \tht_0}{L^p}^q\\
			\notag&\ls \sum_j2^{\be jq}e^{-2qc_3t2^{j\kap}}\left(2^{\s j}\Sob{\lpj \tht_0}{L^p}\right)^q\\
			&\ls t^{-(\be q)/\kap}\Sob{\tht_0}{\Bessig}^q.
	\end{align}
This proves $(i)$.  Now we prove $(ii)$.  Then for let $\eps>0$, there exists $\tht_0^{\eps}\in\Sch$ such that $\Sob{\tht_0-\tht_0^\eps}{\Bessig}<\eps$.  In particular, $\tht_0^\eps\in\Bess$.  Observe that for $0<t\leq T$
	\begin{align}
		\notag\Sob{e^{-t\Lam^\kap}\til{\tht}_0}{\Bess}&\ls\Sob{e^{-t\Lam^\kap}\til{\tht}_0^\eps}{\Bess}+\Sob{e^{-t\Lam^\kap}\til{\tht}_0- e^{-t\Lam^\kap}\til{\tht}_0^\eps}{\Bess}\\
			\notag&\ls\Sob{\tht_0^\eps}{\Bess}+\Sob{e^{-c_3t\Lam^\kap}(\tht_0-\tht_0^\eps)}{\Bess}\\
			\notag&\ls t^{-\be/\kap}\left(T^{\be/\kap}\Sob{\tht_0^\eps}{\Bess}\right)+t^{-\be/\kap}\Sob{\tht_0-\tht_0^\eps}{\Bessig},
	\end{align}
where we have applied \req{sch:case} to $\tht_0-\tht_0^\eps$.  This implies $(ii)$ and we are done.
\end{proof}

\section{Commutator estimates}\label{sect:comm}
In this section, we derive estimates for the commutator
	\begin{align}\label{comm}
		[G_{\gam}\lpj,f]g:=G_{\gam}\lpj(fg)-fG_{\gam}\lpj g,
	\end{align}
where $G_{\gam}:=e^{\gam\Lam^\al}$ and $0<\al<\kap\leq1$, where $\kap$ is the order of dissipation in \req{qg}.  For convenience, we will often use the notation $\til{f}:=G_\gam f$.  Recall that we want to prove the following statement.

\begin{thmcomm}\label{thm:comm:est}
Let $1< p<\infty$ and $1\leq q\leq\infty$.  Let $\gam,\de>0$ and $N>1$ such that $\de<1$.  Suppose the following holds
	\begin{enumerate}
		\item[(i)]$2/p<s<1+2/p-\de$,
		\item[(ii)] $t<2/p$,
		\item[(iii)] $s+t>2/p$.
	\end{enumerate}
Then there exists $(C_j)_{j\in\Z}\in\ell^q$ such that
	\begin{align}
		\Sob{[G_{\gam}\lpj,f]g}{L^p}\ls2^{-(s+t-2/p)j}C_j\left(\gam^{(\al-\de)/\al}2^{(\al-\de)j}+1\right)\Sob{\til{f}}{\dot{B}_{p,q}^{s}}\Sob{\til{g}}{\dot{B}^{t}_{p,q}}.
	\end{align}
\end{thmcomm}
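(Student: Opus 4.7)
The plan is to represent $[G_\gam\lpj,f]g$ as a bilinear Fourier multiplier acting on the pair $(\til{f},\til{g})$ of Gevrey-regularized functions, decompose via Bony's paraproduct, and invoke Theorem~\ref{axis:thm} on each frequency-localized piece. Writing $\wh{f}(\xi)=e^{-\gam\no{\xi}^\al}\wh{\til{f}}(\xi)$ and similarly for $g$, a direct Fourier computation gives $[G_\gam\lpj,f]g=T_{m_j}(\til{f},\til{g})$ with symbol
\begin{equation*}
m_j(\xi,\eta)=e^{-\gam\no{\xi}^\al}\lft(\ph_j(\xi+\eta)\,e^{\gam(\no{\xi+\eta}^\al-\no{\eta}^\al)}-\ph_j(\eta)\rht),
\end{equation*}
which already encodes the commutator cancellation together with the Gevrey weights.

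Next I split the commutator via the paraproduct $fg=\sum_k(S_{k-3}f)g_k+\sum_kf_k(S_{k-3}g)+\sum_{\abs{k-\ell}\leq 2}f_kg_\ell$, yielding low-high, high-low, and balanced contributions. The outer $\lpj$ forces $k\sim j$ in the first two pieces and $\max(k,\ell)\gs j$ in the balanced piece. I will focus on the low-high piece and indicate that the others are analogous; in that regime $\no{\xi}\ll\no{\eta}\sim 2^j$, so two independent sources of smallness are available, namely
\begin{equation*}
\abs{\ph_j(\xi+\eta)-\ph_j(\eta)}\ls 2^{-j}\no{\xi}\quad\text{and}\quad\abs{e^{\gam(\no{\xi+\eta}^\al-\no{\eta}^\al)}-1}\ls\gam\no{\xi}^\al e^{\gam\no{\xi}^\al},
\end{equation*}
the second following from the elementary inequality $\abs{\no{a}^\al-\no{b}^\al}\leq\no{a-b}^\al$ valid for $0<\al\leq 1$. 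After absorbing $e^{\gam\no{\xi}^\al}$ into the outer $e^{-\gam\no{\xi}^\al}$ and interpolating between these two gains at exponent $\de$, one obtains $\abs{m_j}\ls\no{\xi}^\de 2^{-\de j}(\gam^{(\al-\de)/\al}2^{(\al-\de)j}+1)$ on the low-high support, with analogous bounds for derivatives in $\xi$ and $\eta$; this is exactly the Marcinkiewicz hypothesis \req{hm:cond} of Theorem~\ref{axis:thm}. The ``$+1$'' in the prefactor arises from the regime in which the Gevrey cancellation offers no gain, leaving only the classical kernel-smoothness contribution that underlies Miura's estimate.

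Feeding each piece through Theorem~\ref{axis:thm}, I would then choose H\"older exponents $(p_1,p_2)$ with $1/p_1+1/p_2=1/p$ so that the ``low'' factor passes to $L^\infty$ via Bernstein at cost $2^{2k/p}$ while the ``high'' factor remains in $L^p$. Summing the resulting dyadic estimates over $k$ via the $\ell^q$ H\"older inequality and using the Besov norms on $\til{f}$ and $\til{g}$ produces the prefactor $2^{-(s+t-2/p)j}c_j$, with the hypotheses $s>2/p$, $t<2/p$, and $s+t>2/p$ ensuring convergence of each dyadic sum (these are the natural summation conditions inherited from the Besov multiplication structure). The high-low piece is symmetric to the low-high, and the balanced piece is handled by the coarser bound $\abs{m_j}\ls(1+\gam\no{\eta}^\al e^{\gam\no{\eta}^\al-\gam\no{\xi}^\al})$, again combined with Theorem~\ref{axis:thm} and the Marcinkiewicz analysis.

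The main obstacle is calibrating the interpolation exponent in the symbol analysis so that the two gains combine into precisely the hybrid factor $\gam^{(\al-\de)/\al}2^{(\al-\de)j}+1$ stated in the theorem; a subsidiary difficulty is verifying the Marcinkiewicz condition on derivatives of $m_j$ uniformly in $\gam$, since each derivative applied to $e^{\gam(\no{\xi+\eta}^\al-\no{\eta}^\al)}$ brings down factors of the form $\gam\no{\eta}^{\al-1}$ that must be reabsorbed without disturbing the decay in $j$.
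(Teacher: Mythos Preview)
Your overall architecture---paraproduct decomposition, rewriting each piece as a bilinear multiplier acting on $(\til f,\til g)$, and invoking Theorem~\ref{axis:thm}---is exactly the paper's strategy. The gap is in how you extract the commutator cancellation in the low-high piece.

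You propose to bound the two additive pieces of the symbol by $2^{-j}\no{\xi}$ and $\gam\no{\xi}^\al$ respectively and then ``interpolate at exponent $\de$'' to obtain $\no{\xi}^\de 2^{-\de j}(\gam^{(\al-\de)/\al}2^{(\al-\de)j}+1)$. This step does not go through: the symbol is a \emph{sum}, not a product, so there is no interpolation available, and a direct check shows that $\gam\no{\xi}^\al$ is not controlled by the right-hand side without an additional constraint tying $\gam$ to the frequency scales. More seriously, once you absorb $e^{\gam\no{\xi}^\al}$ into the outer $e^{-\gam\no{\xi}^\al}$ you have discarded the exponential decay $e^{-c\gam\no{\xi}^\al}$, and without it the Fa\`a di Bruno factors $(\gam\no{\xi}^\al)^{|\mu|}$ that arise from differentiating the exponential cannot be reabsorbed; the Marcinkiewicz condition on derivatives then fails uniformly in $\gam$. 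You flag this as a ``subsidiary difficulty,'' but it is in fact the crux.

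The paper's device is different: it applies the mean value theorem to $G_\gam(\xi+\eta)\ph_j(\xi+\eta)-G_\gam(\eta)\ph_j(\eta)$, writing it as $\int_0^1\partial_\s[\,\cdot\,]\,d\s$. The product rule then produces two multipliers $m_A$ (from differentiating the exponential) and $m_B$ (from differentiating $\ph_j$), each carrying the full factor $e^{\gam R_{\al,\s}(\xi,\eta)}$ with $R_{\al,\s}=\no{\xi\s+\eta}^\al-\no{\xi}^\al-\no{\eta}^\al\leq -c\no{\xi}^\al$. This retained exponential decay is what makes the derivative bounds work via Fa\`a di Bruno (Proposition~\ref{R:est} and Lemma~\ref{concavity}), and the hybrid prefactor emerges not from interpolation but from the elementary bound $e^{-c\gam 2^{\ell\al}}\ls(\gam 2^{\ell\al})^{-\de/\al}$ applied to $m_A$, while $m_B$ gives the ``$+1$''. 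For the high-high (balanced) piece, the paper additionally uses a duality/rotation trick (Lemma~\ref{rot:lem}) to bring the $\eta$-localization where Theorem~\ref{axis:thm} needs it; your coarse bound for that piece does not address this.
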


To prove this, we will require the Fa\`a di Bruno formula, whose statement we recall from \cite{bcd} for convenience.
\begin{lem}[Fa\`a di Bruno formula]\label{fdb}
Let $u:\R^d\goesto\R^m$ and $F:\R^m\goesto\R$ be smooth functions.  For each multi-index $\al\in\N^d$, we have
	\begin{align}
		\bdy^\al(F\circ u)=\sum_{\mu,\nu}C_{\mu,\nu}\bdy^\mu F\prod_{\substack{1\leq|\be|\leq|\al|\\ 1\leq j\leq m}}(\bdy^\be u^j)^{\nu_{\be_j}},
	\end{align}
where the coefficients $C_{\mu,\nu}$ are nonnegative integers, and the sum is taken over those $\mu$ and $\nu$ such that $1\leq|\mu|,|\nu|\leq|\al|$, $\nu_{\be_j}\in\N^*$,
	\begin{align}
		\sum_{1\leq|\be|\leq|\al|}\nu_{\be_j}=\mu_j,\ \text{for}\  1\leq j\leq m, \ \ \text{and}\ \sum_{\substack{1\leq|\be|\leq|\al|\\1\leq j\leq m}}\be\nu_{\be_j}=\al.
	\end{align}
\end{lem}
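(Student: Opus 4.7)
The plan is to prove Lemma \ref{fdb} by induction on $|\al|$, using the ordinary chain rule as the base case and combining the chain rule with the generalized Leibniz rule for the inductive step.

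For the base case $|\al|=1$, say $\al=e_i$, the usual chain rule gives $\bdy_i(F\circ u)=\sum_{j=1}^m(\bdy_jF)(u)\,\bdy_iu^j$, which matches the claimed formula: the only nonzero terms have $\mu=e_j$ for some $j$, $\nu_{e_i,j}=1$ with all other $\nu_{\be_k}=0$, and $C_{\mu,\nu}=1$. Both constraints $\sum_\be\nu_{\be_j}=\mu_j$ and $\sum_{\be,j}\be\nu_{\be_j}=\al$ hold trivially, and the conditions $1\leq|\mu|,|\nu|\leq|\al|$ and $\nu_{\be_j}\in\N^*$ are immediate.

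For the inductive step, suppose the formula holds for every multi-index of length at most $n$, let $|\al|=n+1$, and write $\al=\al'+e_i$ with $|\al'|=n$. Apply $\bdy_i$ to
$$\bdy^{\al'}(F\circ u)=\sum_{\mu,\nu}C^{(\al')}_{\mu,\nu}(\bdy^\mu F)(u)\prod_{\substack{1\leq|\be|\leq|\al'|\\1\leq j\leq m}}(\bdy^\be u^j)^{\nu_{\be_j}}.$$
Using the chain rule on $(\bdy^\mu F)(u)$ and the Leibniz rule on the product, each resulting term falls into one of two types: (a) differentiating $(\bdy^\mu F)(u)$ in the $u^k$ direction replaces $\bdy^\mu F$ by $\bdy^{\mu+e_k}F$ and introduces a new factor $\bdy_iu^k$, thereby raising $\nu_{e_i,k}$ by $1$; (b) differentiating a factor $(\bdy^\be u^j)^{\nu_{\be_j}}$ yields $\nu_{\be_j}$ copies of a monomial in which $\nu_{\be_j}$ is decreased by $1$ and $\nu_{(\be+e_i),j}$ is raised by $1$.

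The heart of the proof is verifying that both operations preserve the sum identities with the correct increments. In case (a), both $\mu_k$ and $\sum_\be\nu_{\be_k}$ gain $1$, so $\sum_\be\nu_{\be_j}=\mu_j$ is preserved, and $\sum_{\be,j}\be\nu_{\be_j}$ picks up exactly $e_i$, giving $\al'+e_i=\al$. In case (b), $\sum_\be\nu_{\be_j}$ is unchanged while $\sum_{\be,j}\be\nu_{\be_j}$ changes by $-\be+(\be+e_i)=e_i$, again yielding $\al$. The ranges $1\leq|\mu|,|\nu|\leq|\al|$ and $\nu_{\be_j}\in\N^*$ are maintained, and every weight generated is a product of the nonnegative integer $C^{(\al')}_{\mu,\nu}$ with a nonnegative integer (either $1$ in case (a) or $\nu_{\be_j}$ in case (b)). Collecting like monomials and defining $C^{(\al)}_{\mu,\nu}$ as the resulting sum of weights finishes the induction. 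The main obstacle is not analytic but combinatorial: one must confirm that distinct $(\mu,\nu)$ patterns really correspond to distinct derivative monomials $(\bdy^\mu F)\prod(\bdy^\be u^j)^{\nu_{\be_j}}$, so that coefficients may be legitimately collected at the end; once this uniqueness of representation is recognized, the induction proceeds routinely.
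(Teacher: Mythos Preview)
Your induction argument is correct and is the standard proof of the multivariate Fa\`a di Bruno formula. However, the paper does not actually prove this lemma: it is merely quoted from \cite{bcd} (``whose statement we recall from \cite{bcd} for convenience''), so there is no proof in the paper to compare against. Your proposal thus supplies a proof where the paper gives none, and the route you take---chain rule for the base case, then differentiate once more and split into the two cases (a) hitting $(\bdy^\mu F)(u)$ and (b) hitting a factor $(\bdy^\be u^j)^{\nu_{\be_j}}$---is exactly the classical one.

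One minor remark: your closing concern about ``uniqueness of representation'' is unnecessary. The identity is a universal polynomial identity in the formal symbols $\bdy^\mu F$ and $\bdy^\be u^j$ (valid for \emph{all} smooth $F,u$), so the coefficients $C_{\mu,\nu}$ are defined by collecting terms indexed by the pattern $(\mu,\nu)$, not by any linear-independence argument for specific functions. Once you drop that caveat, the induction is entirely routine, as you say.
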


We will repeatedly apply this formula to the function
	\begin{align*}
		(F\circ u)(\xi,\eta)=e^{\gam R_\al(\xi,\eta)},
	\end{align*}
where 
	\begin{align*}
		R_\al(\xi,\eta):&=\no{\xi+\eta}^\al-\no{\xi}^\al-\no{\eta}^\al.
	\end{align*}
For convenience, we provide that application here.  By Lemma \ref{fdb} we have
		\begin{align}\label{dexp}
			\bdy^\be(F\circ u)(\xi,\eta)=\sum_{\mu,\nu}C_{\mu,\nu} \gam^{ |\mu|}e^{\gam R_\al(\xi,\eta)}\prod_{1\leq |b|\leq|\be|}(\bdy^bR_\al(\xi,\eta))^{\nu_b}
		\end{align}
for all  $\be\in\N^2$, where $\nu=(\nu_1,\nu_2)$, $1\leq |\mu|\leq|\be|$ and 
		\begin{align}\label{fdb:indices}
			\sum_{1\leq|b|\leq|\be|}\nu_{b}=\mu\ \ \ \text{and}\ \sum_{1\leq |b|\leq|\be|}b\nu_b=\be.
		\end{align}
In order to apply Theorem \ref{axis:thm}, we will require $R_\al$ to satisfy certain derivative estimates.

\begin{prop}\label{R:est}
Let $0<\al\leq1$, $\s\in[0,1]$, and define $R_{\al,\s}:\R^2\times\R^2\goesto\R$ by
	\begin{align}\notag
		R_{\al,\s}(\xi,\eta):=\no{\xi+\eta\s}^\al-\no{\xi}^\al-\no{\eta}^\al.
	\end{align}
Suppose that $\ell+3\leq k$ and $2^{k-1}\leq\no{\xi}\leq2^{k+1}$ and $2^{\ell-1}\leq\no{\eta}\leq2^{\ell+1}$.  Then
	\begin{align}\label{R:infty}
		\left|R_{\al,\s}(\xi,\eta)\right|\ls 2^{\ell\al},
	\end{align}
and
	\begin{align}\label{R:deriv}
		\left|\bdy_{\xi}^{\be_1}\bdy_{\eta}^{\be_2}R_{\al,\s}(\xi,\eta)\right|\ls 2^{\ell\al}\no{\xi}^{-|\be_1|}\no{\eta}^{-|\be_2|},
	\end{align}
for all multi-indices $\be_1,\be_2\in\N^2$.

If $j+3\leq k$ with $2^{j-1}\leq\no{\eta}\leq2^{j+1}$ and $2^{k-1}\leq\no{\xi},\no{\xi+\eta}\leq2^{k+1}$, then
	\begin{align}\label{R:deriv2}
		\left|\bdy_{\xi}^{\be_1}\bdy_{\eta}^{\be_2}R_{\al,1}(\xi,-\xi-\eta)\right|\ls2^{k\al}\no{\xi}^{-|\be_1|}\no{\eta}^{-|\be_2|},
	\end{align}
for all $\be_1,\be_2\in\N^d$.
\end{prop}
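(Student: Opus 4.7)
The plan is to derive all three bounds from a single analytic input: the standard homogeneity estimate $|\partial^\be(\|x\|^\al)| \lesssim_\be \|x\|^{\al - |\be|}$, valid for $x \neq 0$, combined with the fundamental theorem of calculus to exploit the cancellations built into $R_{\al,\s}$. Under the separation hypothesis $\ell + 3 \leq k$, the triangle inequality gives $\|\xi + \eta\s\| \sim \|\xi\| \sim 2^k$ uniformly in $\s \in [0,1]$, and each factor of $\s$ produced by the chain rule when $\partial_\eta$ lands on $\|\xi + \eta\s\|^\al$ is harmless since $|\s| \leq 1$. The recurring arithmetic input is that $0 < \al \leq 1$ together with $\ell \leq k$ (respectively $j \leq k$) permits the small-scale quantities $2^{\ell\al}$ or $2^{k\al}$ to appear on the right even when the differentiated factors live at the large scale.

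For \eqref{R:infty}, I would write $R_{\al,\s}(\xi,\eta) = \bigl(\|\xi + \eta\s\|^\al - \|\xi\|^\al\bigr) - \|\eta\|^\al$ and apply the fundamental theorem of calculus to $f(t) := \|\xi + t\eta\s\|^\al$ on $[0,1]$. This gives $|\|\xi + \eta\s\|^\al - \|\xi\|^\al| \lesssim \|\eta\|\|\xi\|^{\al - 1} \sim 2^{\ell + k(\al - 1)}$, which is at most $2^{\ell\al}$ since $\ell(1-\al) \leq k(1-\al)$. The remaining $\|\eta\|^\al \sim 2^{\ell\al}$ closes the bound.

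For \eqref{R:deriv}, I would split by whether $\be_1$ or $\be_2$ vanishes. When both $|\be_1|, |\be_2| \geq 1$, only $\|\xi + \eta\s\|^\al$ survives differentiation, and homogeneity yields $\lesssim 2^{k(\al - |\be_1| - |\be_2|)}$; the target $2^{\ell\al - k|\be_1| - \ell|\be_2|}$ then follows from $(k - \ell)(\al - |\be_2|) \leq 0$, which holds because $|\be_2| \geq 1 \geq \al$. When $|\be_2| = 0 < |\be_1|$, the $\|\eta\|^\al$ term drops and a second application of the fundamental theorem of calculus to the difference $\partial^{\be_1}(\|\cdot\|^\al)(\xi + \eta\s) - \partial^{\be_1}(\|\cdot\|^\al)(\xi)$ costs $\|\eta\|\|\xi\|^{\al - |\be_1| - 1}$, again dominated by $2^{\ell\al}\|\xi\|^{-|\be_1|}$ via the same $\al \leq 1$, $\ell \leq k$ device. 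When $|\be_1| = 0 < |\be_2|$, the $\|\xi\|^\al$ term drops and the two remaining pieces are bounded separately by homogeneity; the worst contribution is $2^{\ell(\al - |\be_2|)} = 2^{\ell\al}\|\eta\|^{-|\be_2|}$.

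Finally, for \eqref{R:deriv2}, I would first simplify by computing $R_{\al,1}(\xi, -\xi - \eta) = \|\eta\|^\al - \|\xi\|^\al - \|\xi + \eta\|^\al$, in which $\|\xi\|, \|\xi + \eta\| \sim 2^k$ while $\|\eta\| \sim 2^j$. Each of the three summands can now be differentiated independently by homogeneity: the $\|\xi\|^\al$ contribution matches the target $2^{k\al}\|\xi\|^{-|\be_1|}$ at $|\be_2| = 0$; the $\|\xi + \eta\|^\al$ contribution gives $2^{k(\al - |\be_1| - |\be_2|)}$, dominated by $2^{k\al - k|\be_1| - j|\be_2|}$ via $j \leq k$; and the $\|\eta\|^\al$ contribution gives $2^{j(\al - |\be_2|)}$ at $|\be_1| = 0$, dominated by $2^{k\al - j|\be_2|}$ via $j\al \leq k\al$. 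The main obstacle throughout is really only bookkeeping, namely tracking in each derivative configuration which scale wins; no tool beyond homogeneity of $\|\cdot\|^\al$ and the one-dimensional mean value theorem is required.
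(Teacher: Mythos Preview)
Your proposal is correct and follows essentially the same route as the paper: a three-way case split on which of $\be_1,\be_2$ vanishes, the homogeneity bound $|\partial^\be(\|x\|^\al)|\lesssim\|x\|^{\al-|\be|}$, and the mean value theorem to capture the cancellation between $\|\xi+\eta\s\|^\al$ and $\|\xi\|^\al$. The only organizational difference is that the paper rewrites $R_{\al,\s}$ as an integral $\int_0^1\|\xi+\eta\tau\s\|^{\al-2}\bigl((\xi\cdot\eta)\s+\|\eta\|^2\s^2\tau\bigr)\,d\tau-\|\eta\|^\al$ at the outset and differentiates that product in every case, whereas you invoke the fundamental theorem of calculus only in the single case $|\be_2|=0<|\be_1|$ where the cancellation is actually needed; your presentation is therefore a bit more direct, but the underlying argument is the same.
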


\begin{proof}
We prove \req{R:infty} and \req{R:deriv}.  The proof of \req{R:deriv2} is easier.  

To proof of \req{R:infty} follows from the triangle inequality
	\begin{align}\notag
		\left|R_{\al,\s}(\xi,\eta)\right|\ls(1-\s)\no{\eta}^\al\ls2^{\ell\al}
	\end{align}
To prove \req{R:deriv}, we first apply the mean value theorem to write
	\begin{align}\notag
		R_{\al,\s}(\xi,\eta)=\int_0^1\no{\xi+\eta\tau\s}^{\al-2}((\xi\cdotp\eta)\s+\no{\eta}^2\s^2\tau)\ d\tau-\no{\eta}^\al.
	\end{align}
Let $\be\in\N^2\times\N^2$, where $\be=(\be_1,\be_2)$.  We need to consider three cases.  First suppose that $\be_1=0,\be_2\neq0$.  Then observe that 
	\begin{align}\notag
		\left|\bdy^{\be}R_\al(\xi,\eta)\right|&\ls\sum_{\be}c_\be\int_0^1\left(\no{\xi+\eta\s\tau}^{\al-2-|\be_1|}\bdy^{\be_2}((\xi\cdotp\eta)\s+\no{\eta}^{2}\s^2\tau)\right)\ d\tau+\no{\eta}^{\al-|\be|}.
	\end{align}
Observe that
	\begin{align}\notag
		\left|\bdy^{\be_2}((\xi\cdotp\eta)\s+\no{\eta}^2\s^2\tau)\right|\ls\begin{cases}
							2^{k+\ell}&,|\be_2|=0\\
							2^k&,|\be_2|=1\\
							1&,|\be_2|=2\\
							0&,|\be_2|\geq3.
				\end{cases}
	\end{align}
In each case, using the fact that $\al<1$ and $\ell+3\leq k$, we have 
	\begin{align}\notag
		\left|\bdy^{\be}R_{\al,\s}(\xi,\eta)\right|&\ls2^{\ell\al}2^{-\ell|\be|},
	\end{align}
which implies \req{R:deriv} since $\be=(\be_1,0)$.

If $\be_1\neq0,\be_2=0$, then
	\begin{align}\notag
		\left|\bdy^{\be_2}((\xi\cdotp\eta)\s+\no{\eta}^2\s^2\tau)\right|\ls\begin{cases}
							2^{k+\ell}&,|\be_2|=0\\
							2^\ell&,|\be_2|=1\\
							0&,|\be_2|\geq2.
				\end{cases}
	\end{align}
Now in each case we have 
	\begin{align}\notag
		\left|\bdy^\be R_{\al,\s}(\xi,\eta)\right|&\ls2^{k(\al-1-|\be|)}2^{\ell}\ls2^{\ell\al}2^{-k|\be|},
	\end{align}
which again implies \req{R:deriv}.

 Finally, if $\be_1\neq0,\be_2\neq0$, then
	\begin{align}\notag
		\left|\bdy^{\be_2}((\xi\cdotp\eta)\s+\no{\eta}^2\s^2\tau)\right|\ls\begin{cases}
							2^{k+\ell}&,|\be_2|=0\\
							2^k&,|\be_2|=1, \be_2^\xi=0\\
							2^\ell&,|\be_2|=1,\be_2^\eta=0\\
							1&,|\be_2|=2,\be_2^\xi=0\ \text{or}\ |\be_2^\xi|=|\be_2^\eta|=1\\
							0&,|\be_2|\geq3\ \text{or}\ |\be_2|=2, \be_2^\eta=0.
				\end{cases}
	\end{align}
and arguing as before, we obtain
	\begin{align}
		\left|\bdy^{\be}R_{\s,\al}(\xi,\eta)\right|&\ls2^{\ell\al}2^{-k|\be_\xi|}2^{-\ell|\be_\eta|},
	\end{align}
where $\be=(\be_\xi,\be_\eta)=(\be_1,\be_2)\in\N^4\times\N^4$, $\be_i=(\be_i^\xi,\be_i^\eta)\in\N^2\times\N^2$, and  $\be_\xi=\be_1^\xi+\be_2^\xi$ and $\be_\eta=\be_1^\eta+\be_2^\eta$.  Thus, \req{R:deriv} is again established.
\end{proof}

We will also need the following ``rotation" lemma.

\begin{lem}\label{rot:lem}
Let $T_m$ be a bilinear multiplier operator with multiplier $m:\R^d\times\R^d\goesto\R$.  Then there exists a bounded multiplier $\til{m}$ such that
	\begin{align}
		\lb T_m(f,g),h\rb=\lb T_{\til{m}}(h,g), f\rb,
	\end{align}
for all $f,g,h\in\Sch(\R^d)$.  In particular, if $T_m:L^p\times L^q\goesto L^r$ is bounded for $1/r=1/p+1/q$, then $T_{\til{m}}:L^{r'}\times L^q\goesto L^p$ is bounded, where $r'$ is the H\"older conjugate of $r$.  Moreover, $|\bdy^\be m|=|\bdy^\be\til{m}|$ for all $\be\in\N^d$.
\end{lem}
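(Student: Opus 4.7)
The plan is to pass to the Fourier side, write the trilinear form $\lb T_m(f,g),h\rb$ as an absolutely convergent integral against $\hat{f}(\xi)\hat{g}(\eta)\hat{h}(-\xi-\eta)$, and then perform a linear change of frequency variable to reshuffle which factor plays the role of the paired (``output'') function. Concretely, Plancherel together with Fubini gives
\begin{align*}
\lb T_m(f,g),h\rb \;=\; \iint m(\xi,\eta)\,\hat{f}(\xi)\,\hat{g}(\eta)\,\hat{h}(-\xi-\eta)\,d\xi\,d\eta
\end{align*}
for $f,g,h\in\Sch(\R^d)$. Substituting $\zeta:=-\xi-\eta$ (so that $\xi=-\zeta-\eta$, Jacobian $1$) and defining $\til{m}(\zeta,\eta):=m(-\zeta-\eta,\eta)$, the right-hand side is exactly $\lb T_{\til{m}}(h,g),f\rb$. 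This yields both the duality identity and an explicit formula for $\til{m}$.

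With $\til{m}$ identified, the mapping property of $T_{\til{m}}$ on $L^{r'}\times L^q$ is a standard consequence of duality: the assumed bound for $T_m$ gives $\abs{\lb T_m(f,g),h\rb}\ls\Sob{f}{L^p}\Sob{g}{L^q}\Sob{h}{L^{r'}}$ for all Schwartz inputs, so by the identity above the trilinear form $(h,g,f)\mapsto\lb T_{\til{m}}(h,g),f\rb$ is bounded by $\Sob{h}{L^{r'}}\Sob{g}{L^q}\Sob{f}{L^p}$, and H\"older scaling $\tfrac1{r'}+\tfrac1q=\tfrac1{p'}$ reads this as $T_{\til{m}}:L^{r'}\times L^q\goesto(L^p)^*$. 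Density of $\Sch$ in each $L^s$ with $1\leq s<\infty$ then closes the extension argument.

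What remains is to verify that the Marcinkiewicz-type bound \req{hm:cond} transfers from $m$ to $\til{m}$; this is the content of the informal assertion $\abs{\bdy^\be m}=\abs{\bdy^\be\til{m}}$ in the statement. Since $(\zeta,\eta)\mapsto(-\zeta-\eta,\eta)$ is linear with nonsingular $2d\times 2d$ Jacobian, the chain rule expresses $\bdy_\zeta^{\be_1}\bdy_\eta^{\be_2}\til{m}(\zeta,\eta)$ as a finite signed combination of derivatives $(\bdy_u^{\gam_1}\bdy_v^{\gam_2}m)(-\zeta-\eta,\eta)$ of the \emph{same} total order $\abs{\gam_1}+\abs{\gam_2}=\abs{\be_1}+\abs{\be_2}$, with combinatorial coefficients depending only on $\abs{\be}$. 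In the regime relevant to the commutator application in Section \ref{sect:comm}, the spectral localization of $g$ forces $\no{\eta}$ to be much smaller than $\no{\zeta}$, so $\no{-\zeta-\eta}\sim\no{\zeta}$ and the pointwise bound $\no{\xi}^{-|\gam_1|}\no{\eta}^{-|\gam_2|}$ converts into an estimate of the same form in $\no{\zeta},\no{\eta}$. The main obstacle is purely bookkeeping: making sure the chain-rule combinatorics do not inflate the scale of any factor of $\no{\zeta}$ or $\no{\eta}$, which is guaranteed by the affine structure of the substitution and the aforementioned dyadic separation.
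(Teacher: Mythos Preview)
Your approach is correct and matches the paper's: pass to the Fourier side, change one frequency variable linearly to swap the roles of $f$ and $h$, and deduce boundedness of $T_{\til m}$ by duality. Your formula $\til m(\zeta,\eta)=m(-\zeta-\eta,\eta)$ differs only cosmetically from the paper's $\til m(\xi,\eta)=(-1)^dm(\xi,-\xi-\eta)$, and your handling of the final derivative assertion is in fact more careful than the paper's one-word ``obviously''---you correctly note that the literal equality $|\bdy^\be\til m|=|\bdy^\be m|$ fails and instead argue via the chain rule that the Marcinkiewicz bound \req{hm:cond} transfers under the dyadic separation $\no{\eta}\ll\no{\zeta}$ relevant to the application in Section~\ref{sect:comm}.
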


\begin{proof}
By change of variables we have
	\begin{align}
		\int T_m(f,g)(x){h(x)}\ dx&=c_d^2\int\int\int e^{ix\cdotp(\xi+\eta)}m(\xi,\eta)\hat{f}(\xi)\hat{g}(\eta){h(x)}\ d\xi\ d\eta\ dx\notag\\
			\notag&=(-1)^dc_d^2\int\int\int e^{-ix\cdotp\nu}m(\xi,\xi-\nu)\hat{f}(\xi)\hat{g}(-\nu-\xi){h(x)}\ dx\ d\xi\ d\nu\\
			\notag&=(-1)^dc_d\int\int m(\xi,-\nu-\xi)\hat{g}(-\xi-\nu)\hat{{h}}(\nu)\hat{f}(\xi)\ d\nu\ d\xi\\
			\notag&=(-1)^dc_d^2\int\int\int e^{-ix\cdotp\xi}m(\xi,-\xi-\nu)\hat{g}(-\nu-\xi)\hat{{h}}(\nu)f(x)\ d\nu\ d\xi\ dx\\
			\notag&={\lb T_{\til{m}}(h,{g}),{f}\rb},
	\end{align}
where
	\begin{align}
		\til{m}(\xi,\eta):=(-1)^d{m}(\xi,-\xi-\eta).
	\end{align}
Then obviously, $\til{m}$, $|\bdy^\be \til{m}|=|\bdy^\be m|$.  Boundedness of $T_{\til{m}}$ then follows from duality.
\end{proof}

Before we proceed to the proof of Theorem \ref{thm:comm:est}, observe that the paraproduct decomposition yields
	\begin{align}\label{pp:comm}
		[G_{\gam}\lpj,f]g=&\sum_kG_{\gam}\lpj(S_kf\lpk g)+G_{\gam}\lpj(\lpk fS_kg)+G_{\gam}\lpj(\til{\lp}_k f\lpk g)\\
				&-\left(\sum_k(S_kf)(\lpj\lpk \til{g})+(\lpk f)(\lpj S_k\til{g})+(\til{\lp}_ kf)(\lpj\lpk \til{g})\right).\notag
	\end{align}
Then by the localization properties in \req{lp:loc}, we can reduce \req{pp:comm} to
	\begin{align}\label{pp:comm:cases}
		[G_{\gam}\lpj,f]g=&\sum_{|k-j|\leq4}[G_{\gam}\lpj,S_kf]\lpk g+G_\gam\lpj(\lpk fS_kg)\\
				&+\sum_{k\geq j+3}G_{\gam}\lpj(\til{\lp}_k f\lpk g)\notag\\
				&-\sum_{k\geq j+1}\lpk f\lpj S_k\til{g}-\sum_{|k-j|\leq2}\til{\lp}_k f\lpj\lpk\til{g}.\notag
	\end{align}

Now we prove Theorem \ref{thm:comm:est} by considering the cases as they are presented in each row of \req{pp:comm:cases}.

\subsection{Case: $k\geq j+3$}

First, we rewrite $G_{\gam}\lpj(\til{\lp}_k f\lpk g)$ as
	\begin{align}\label{kgtrj:insertgev}
		G_{\gam}\lpj(G_\gam^{-1}\til{\lp}_k \til{f}G^{-1}_\gam\lpk \til{g})
	\end{align}
The multiplier associated to \req{kgtrj:insertgev} is
	\begin{align}\label{mult:kgtrj}
		m_{k,j}(\xi,\eta):=e^{\gam(\no{\xi+\eta}^\al-\no{\xi}^\al-\no{\eta}^\al)}\ph_j(\xi+\eta)\til{\ph}_k(\xi)\ph_k(\eta),
	\end{align}
where $\til{\ph}_k=\sum_{|k-\ell|\leq 2}\ph_\ell$.  By Lemma \ref{rot:lem}, it suffices to prove 
	\begin{align}\label{claim:kgtrj}
		|\bdy_\xi^{\be_1}\bdy_\eta^{\be_2} m_{k,j}(\xi,-\xi-\eta)|\ls\no{\xi}^{-|\be_1|}\no{\eta}^{-|\be_2|}.
	\end{align}
We can then apply Theorem \ref{axis:thm}.

Observe that for $\be=(\be_1,\be_2)$, by \req{dexp} and \req{fdb:indices} we have
	\begin{align}\notag
		|\bdy^\be &m_{k,j}(\xi,-\xi-\eta)|\\
			\notag&\ls\sum_{\mu,\nu}C_{\mu,\nu} \gam^{ |\mu|}e^{\gam(\no{\eta}^\al-\no{\xi}^\al-\no{\xi+\eta}^\al)}\prod_{1\leq |b|\leq|\be|}(2^{k(\al-|b|)})^{\nu_b}\\
			\notag&\ls2^{-k|\be|}\sum_{\mu,\nu}C_{\mu,\nu}(\gam2^{k\al})^{ |\mu|}e^{\gam(\no{\eta}^\al-\no{\xi}^\al-\no{\xi+\eta}^\al)}
	\end{align}
where we have used \req{R:deriv2} in Proposition \ref{R:est}.  Also, by the triangle inequality
	\begin{align}\notag
		\no{\eta}^\al-\no{\xi}^\al-\no{\xi+\eta}^\al\ls -c_\al2^{k\al}.
	\end{align}
for some absolute constant $c_\al>0$.  Thus
	\begin{align}\label{kgtrj:const}
		|\bdy^\be &m_{k,j}(\xi,-\xi-\eta)|&\ls2^{-k|\be|}\sum_{\mu,\nu}C_{\mu,\nu}(\gam2^{k\al})^{|\mu|}e^{-c_\al\gam2^{k\al}}\ls 2^{-k|\be_1|}2^{-k|\be_2|}
	\end{align}
holds for all $\xi\in\R^2$, which implies \req{claim:kgtrj}.

Therefore by Theorem \ref{axis:thm}, we have
	\begin{align}\label{Lp:bounds:kgtrj}
		\Sob{G_{\gam}\lpj(\til{\lp}_k f\lpk g)}{L^r}\ls\Sob{\til{\lp}_k \til{f}}{L^p}\Sob{\lpk \til{g}}{L^q},
	\end{align}
where $1/r=1/p+1/q$ and $1< r,p<\infty$, $1\leq q\leq \infty$.  

Now let $\s=s+t-2/p$.  By \req{Lp:bounds:kgtrj} and the Bernstein inequalities
	\begin{align}\notag
		2^{\s j}&\Sob{G_{\gam}\lpj(\lpk f\lpk g)}{L^p}\\
		&\ls\sum_k\underbrace{\chi_{[n\geq3]}(k-j)}_{\mu_{k-j}}\underbrace{2^{-(s+t-2/p)(k-j)}}_{a_{k-j}}\underbrace{2^{s k}\no{\til{\lp}_k \til{f}}_{L^p}}_{b_k}\underbrace{2^{tk}\no{\lpk \til{g}}_{L^p}}_{c_k}.\notag
	\end{align}
Observe that by Young's convolution inequality
	\begin{align}\notag
		\left(\sum_{k}(\mu_{k-j}a_{k-j}b_k)^q\right)^{1/q}\leq\left(\sum_{k\geq3}a_k\right)\left(\sum_{k}b_k^q\right)^{1/q},
	\end{align}
which will be finite provided that
	\begin{align}\notag
		s+t-2/p>0.
	\end{align}
Thus
	\begin{align}\label{kgtrj:diag}
		2^{(s+t-2/p) j}\sum_{k\geq j+3}\Sob{G_{\gam}\lpj(\til{\lp}_k f\lpk g)}{L^r}&\ls c_j\Sob{\til{f}}{\dot{B}_{p,q}^{s}}\Sob{\til{g}}{\dot{B}^{t}_{p,\infty}},
	\end{align}
where 
	\begin{align}\notag
		c_j:=\Sob{\til{f}}{\dot{B}_{p,q}^{s}}^{-1}\sum_k\mu_{k-j}a_{k-j}{b_k}.
	\end{align}
Observe that $(c_j)_{j\in\Z}\in\ell^q$.

\subsection{Cases: $k\geq j+1$ and $|k-j|\leq1$}
Then the corresponding terms are $\lpk f\lpj S_k\til{g}$ and $\lpk f\lpj\lpk \til{g}$, respectively.  By H\"older's inequality and Bernstein we have
	\begin{align}
		2^{\s j}\Sob{\lpk f\lpj S_k\til{g}}{L^p}\ls c_j2^{-(s-2/p)(k-j)}2^{s k}\Sob{\lpk{f}}{L^p}\Sob{\til{g}}{\dot{B}^{t}_{p,q}}.
	\end{align}
where 
	\begin{align}\notag
		c_j:=\Sob{\til{g}}{\dot{B}^{t}_{p,q}}^{-1}2^{tj}\Sob{\lpj\til{g}}{L^p}.
	\end{align}
Observe that by H\"older's  inequality
	\begin{align}\notag
		\sum_{k\geq j+1}2^{-(s-2/p)(k-j)}\chi_{[n\geq1]}(k-j)2^{s k}\Sob{\lpk{f}}{L^p}\ls\left(\sum_{k\geq1}2^{-(s-2/p)kq'}\right)^{1/q'}\Sob{{f}}{\dot{B}_{p,q}^s},
	\end{align}
which is finite provided that $s-2/p>0$.  Therefore 
	\begin{align}\label{kgtrj:holder}
		2^{(s+t-2/p)j}\sum_{k\geq j+1}\Sob{\lpk f\lpj S_k\til{g}}{L^p}\ls c_j\Sob{{f}}{\dot{B}_{p,q}^s}\Sob{\til{g}}{\dot{B}^{t}_{p,q}}
	\end{align}

Similarly, we have for any $s\in\R$
	\begin{align}\label{ksimj:holder}
		2^{(s+t-2/p)j}\sum_{|k-j|\leq1}\Sob{\til{\lp}_k f\lpk \til{g}}{L^p}\ls c_j\Sob{{f}}{\dot{B}_{p,q}^s}\Sob{\til{g}}{\dot{B}^{t}_{p,q}},
	\end{align}
where
	\begin{align}\notag
		c_j:=\Sob{\til{g}}{\dot{B}^{t}_{p,q}}^{-1}2^{tj}\Sob{\lpj\til{g}}{L^p}.
	\end{align}

\subsection{Case: $|k-j|\leq4$}

\subsubsection{Non-commutator terms}

Here we treat $G_\gam\lpj(\lpk fS_kg)$.  First, rewrite $G_\gam\lpj(\lpk fS_kg)$ as
	\begin{align}\label{ksimj1:insertgev}
		\sum_{\ell\leq k-3}G_{\gam}\lpj(G_\gam^{-1}\lpk \til{f}G_\gam^{-1}\lpl \til{g}).
	\end{align}
We claim that the associated multiplier satisfies the following bounds
	\begin{align}\label{claim1:ksimj}
		|\bdy_{\xi}^{\be_1}\bdy_\eta^{\be_2} m_{j,k}(\xi,\eta)|\ls\no{\xi}^{-|\be_1|}\no{\eta}^{-|\be_2|},
	\end{align} 
where
	\begin{align}\label{mult:ksimj1}
		m_{j,k}(\xi,\eta)=e^{\gam(\no{\xi+\eta}^\al-\no{\xi}^\al-\no{\eta}^\al)}\ph_j(\xi+\eta)\ph_k(\xi)\ph_k(\eta).
	\end{align}

To this end, let $\be=(\be_\xi,\be_\eta)$ and observe that by \req{dexp} and Proposition \ref{R:est} we have
	\begin{align}\notag
		|\bdy^\be e^{\gam R_\al(\xi,\eta)}|&\ls\sum_{\mu,\nu}C_{\mu,\nu} \gam^{ |\mu|}e^{\gam R_\al(\xi,\eta)}\prod_{1\leq |b|\leq|\be|}(2^{\ell(\al-|b_\eta|)}2^{-k|b_\xi|})^{\nu_b}.
	\end{align}
Since $\no{\eta}\sim2^{\ell}$ and $k-\ell\geq3$, it follows by Lemma \ref{concavity} that
	\begin{align}\notag
		\no{\xi+\eta}^\al-\no{\xi}^\al-\no{\eta}^\al\ls -c_\al2^{\ell\al}.
	\end{align}
Thus, by \req{fdb:indices} we get
	\begin{align}\label{ksimj:const1}
		|\bdy^\be e^{\gam R_\al(\xi,\eta)}|&\ls \sum_{\mu,\nu} C_{\mu,\nu}\gam^{|\mu|}e^{-c_\al\gam2^{\ell\al}}2^{\ell(\al|\mu|-|\be_\eta|)}2^{-k|\be_\xi|}\\
			\notag&\ls2^{-k|\be_\xi|}2^{-\ell|\be_\eta|}\sum_{\mu,\nu}C_{\mu,\nu}(\gam2^{\ell\al})^{|\mu|}e^{-c_\al\gam2^{\ell\al}}\\
			\notag&\ls2^{-k|\be_\xi|}2^{-\ell|\be_\eta|}
	\end{align}
holds for all $\xi\in\R^2$

Hence, by the product rule and the fact that $2^k\sim2^j$, we can conclude that
	\begin{align}\notag
		|\bdy_\xi^{\be_1}\bdy_\eta^{\be_2} m_{j,k}(\xi,\eta)|\ls 2^{-k|\be_1|}2^{-\ell|\be_2|}.
	\end{align}
for all $\xi\in\R^2$, which implies \req{claim1:ksimj}.

Therefore by Theorem \ref{axis:thm}, we have
	\begin{align}\label{Lp:bounds:ksimj}
		\Sob{G_{\gam}\lpj(\lpk fS_{k} g)}{L^r}\ls\sum_{\ell\leq k-1}\Sob{\lpk \til{f}}{L^p}\Sob{\lpl \til{g}}{L^q},
	\end{align}
where $1/r=1/p+1/q$ and $1\leq r<\infty$, $1< p<\infty$, $1<q\leq\infty$. 
	
Now let $\s=s+t-2/p$ and $N>1$.  Let $p^*:=(pN)/(N-1)$.  Then by \req{Lp:bounds:ksimj}, the Bernstein inequalities, and the fact that $|k-j|\leq3$, we have
	\begin{align}\notag
		2^{\s j}&\Sob{G_{\gam}\lpj(\lpk fS_{k+1} g)}{L^p}\\
			\notag&\ls\sum_{\ell\leq k-1}2^{(\s-s-t+2/p^*)k}2^{sk}\Sob{\lpk\til{f}}{L^{p^*}}2^{t\ell}\Sob{\lpl\til{g}}{L^{p}}2^{-(2/p^*-t)(k-\ell)}\\
			\notag&\ls2^{sj}\Sob{\lpj\til{f}}{L^{p}}\sum_{\ell\leq k-1}2^{t\ell}\Sob{\lpl\til{g}}{L^{p}}2^{-(2/p^*-t)(k-\ell)}
	\end{align}
Let $t<2/p$.  Observe that for $N$ large enough, we have $t<2/p^*$.  Then
	\begin{align}\label{ksimj:nocomm}
		2^{(s+t-2/p) j}\Sob{G_{\gam}\lpj(\lpk fS_{k+1} g)}{L^p}\ls C_j\Sob{\til{f}}{\dot{B}_{p,\infty}^s}\Sob{\til{g}}{\dot{B}_{p,q}^t},
	\end{align}
where
	\begin{align}\notag
		C_j:=\sum_{\ell\leq j+2}2^{t\ell}\Sob{\lpl\til{g}}{L^p}2^{(t-2/p^*)(j-\ell)},
	\end{align}
which satisfies $(C_j)_{j\in\Z}\in\ell^q$.

\subsubsection{Commutator term}
Finally, we consider the term $[G_{\gam}\lpj,S_kf]\lpk g$, which we will denote as $T_{m_{j,k}}(S_kf,\lpk g)$.  Observe that
	\begin{align}\notag
		T_{m_{j,k}}&(S_kf,\lpk g)(x)\\
		\notag	&=\int\int e^{ix\cdotp(\xi+\eta)}\left[G_\gam(\xi+\eta)\ph_j(\xi+\eta)-G_\gam(\eta)\ph_j(\eta)\right]\psi_k(\xi)\ph_k(\eta)\hat{f}(\xi)\hat{g}(\eta)\ d\xi d\eta.
	\end{align}
Then by the mean value theorem 
	\begin{align}\notag
		T_{m_{j,k}}(S_kf,\lpk g)(x)=\sum_{i=1,2}\sum_{\ell\leq k-3}\int_0^1 T_{m_{i,j,k,\ell,\s}}(\lpl\bdy_i \til{f},\lpk \til{g})(x)\ d\s,
	\end{align}
where
	\begin{align}
	\notag	m_{i,j,k,\ell,\s}(\xi,\eta)=&m_A(\xi,\eta)+m_B(\xi,\eta),
	\end{align}
and
	\begin{align}
		\notag m_A(\xi,\eta)&:=\al\gam e^{\gam R_{\al,\s}(\xi,\eta)}\no{\xi\s+\eta}^{\al-2}({\xi_i\s+\eta_i})\ph_j(\xi\s+\eta)\ph_\ell(\xi)\ph_k(\eta)\\
		\notag m_B(\xi,\eta)&:=e^{\gam R_{\al,\s}(\xi,\eta)}(\bdy_i\ph_0)(2^{-j}(\xi\s+\eta))2^{-j}\ph_\ell(\xi)\ph_k(\eta).
	\end{align}
Now observe that since $\no{\xi}\sim2^\ell$, $\no{\eta}\sim2^k$, and $k-\ell\geq3$, by Lemma \ref{concavity} there exists a constant $c_\al>0$ such that
	\begin{align}\label{tausim1}
		\no{\xi\s+\eta}-\no{\xi\s}^\al-\no{\eta}^\al\leq-c_\al\no{\xi}^\al, \ \text{for}\ \s\geq1/2,
	\end{align}
and by the triangle inequality
	\begin{align}\label{tausim0}
		\no{\xi\s+\eta}-\no{\xi}^\al-\no{\eta}^\al\leq-c_\al\no{\xi}^\al, \ \text{for}\ \s\leq1/2.
	\end{align}

Suppose that $\s\leq1/2$ and observe that by Proposition \ref{R:est}, Fa\`a di Bruno, and \req{tausim0}, we have
	\begin{align}
	\notag|\bdy^\be e^{\gam R_{\al,\s}(\xi,\eta)}|&\ls\sum_{\mu,\nu}C_{\mu,\nu} \gam^{ |\mu|}e^{\gam R_{\al,\s}(\xi,\eta)}\prod_{1\leq |b|\leq|\be|}(2^{\ell(\al-|b_\xi|)}2^{-k|b_\eta|})^{\nu_b}\\
				\notag&\ls2^{-\ell|\be_1|}2^{-k|\be_2|}\sum_{\mu,\nu}C_{\mu,\nu}(\gam2^{\ell\al})^{|\mu|}e^{-c_\al\gam2^{\ell\al}}\\
				&\ls e^{-(c_\al/2)\gam2^{\ell\al}}\no{\xi}^{-|\be_1|}\no{\eta}^{-|\be_2|}.\label{tausim1:est}
	\end{align}

Similarly, for $\s\geq1/2$, using \req{tausim1} instead, we obtain
	\begin{align}
		\notag|\bdy^\be e^{\gam R_{\al,\s}(\xi,\eta)}|&\ls\sum_{\mu,\nu}C_{\mu,\nu} \gam^{ |\mu|}e^{-(1-\s^\al)\gam\no{\xi}^\al}\prod_{1\leq |b|\leq|\be|}(\bdy^bR_{\al,\s}(\xi,\eta))^{\nu_b}\\
		\notag&\ls\sum_{\mu,\nu} C_{\mu,\nu}\gam^{|\mu|}e^{-c_\al\gam2^{\ell\al}}\prod_{1\leq |b|\leq|\be|}(2^{\ell(\al-|b_\xi|)}2^{-k|b_\eta|})^{\nu_b}\\
		&\ls e^{-(c_\al/2)\gam2^{\ell\al}}\no{\xi}^{-|\be_1|}\no{\eta}^{-|\be_2|}.\label{tausim0:est}
	\end{align}

For the other factors, observe that since $\no{\xi\s+\eta}\sim2^j$ we have
	\begin{align}
		\left|\bdy^\be\no{\xi\s+\eta}^{\al-2}\right|&\ls\no{\xi\s+\eta}^{\al-2-|\be|}\ls2^{j(\al-2)}\no{\xi}^{-|\be_\xi|}\no{\eta}^{-|\be_\eta|}\label{comm:factor2}\\
	\left|\bdy^\be({\xi_i\s+\eta_i})\right|&\ls\begin{cases}
							2^\ell+2^k&, |\be_\xi|=0\\
							1&, |\be|=1\ \text{and}\ |\be_{\xi}^i|\ \text{or}\ |\be_\eta^i|=1\\
							0&,|\be|\geq2\ \text{or}\ |\be^{i'}|\neq0\ i'\neq i
		\end{cases}\label{comm:factor3}
	\end{align}
It follows from \req{comm:factor2} and \req{comm:factor3} that
	\begin{align}\label{comm:factor:final}
		\left|\bdy^\be\left(\no{\xi\s+\eta}^{\al-2}(\xi_i\s+\eta_i)\right)\right|\ls2^{j(\al-1)}2^{-\ell|\be_\xi|}2^{-k|\be_\eta|}.
	\end{align}
We also have
	\begin{align}
	\left|\bdy_\xi^\be\ph_\ell(\xi)\right|&\ls2^{-\ell|\be|}\ls\no{\xi}^{-|\be|}\label{comm:factor4},\\
	\left|\bdy_\eta^\be\ph_k(\eta)\right|&\ls2^{-k|\be|}\ls\no{\eta}^{-|\be|}\label{comm:factor5}
	\end{align}
for all $\eta\in\R^2$.

Therefore, combining \req{tausim1:est}, \req{tausim0:est} and \req{comm:factor:final}-\req{comm:factor5}, we can deduce that
	\begin{align}
		\left|\bdy^{\be_1}_\xi\bdy^{\be_2}_\eta {m}_{A}(\xi,\eta)\right|&\ls\gam2^{-j(1-\al)}e^{-(c_\al/2)\gam2^{\ell\al}}\no{\xi}^{-|\be_1|}\no{\eta}^{-|\be_2|}\notag\\
		&\ls\gam^{1-\de/\al}2^{-j(1-\al)}2^{-\ell\de}\no{\xi}^{-|\be_1|}\no{\eta}^{-|\be_2|}.\label{mA:est},
	\end{align}
for any $\de>0$.

On the other hand, we can estimate $m_B$ using \req{tausim1:est} and \req{tausim0:est} by
	\begin{align}
		\left|\bdy_\xi^{\be_1}\bdy_\eta^{\be_2} m_B(\xi,\eta)\right|\ls 2^{-j}\no{\xi}^{-|\be_1|}\no{\eta}^{-|\be_2|}.
	\end{align}
Let $N>1$ and $p^*=(pN)/(N-1)$.  Then by Theorem \ref{axis:thm} and the Bernstein inequalities
	\begin{align}\label{comm:estA}
		\Sob{T_{m_{A}}(\lpl\bdy_i\til{f},\lpk \til{g})}{L^p}&\ls\gam^{1-\de/\al}2^{-j(1-\al)}2^{(1-\de +2/p^*)\ell}\Sob{\lpl\til{f}}{L^p}\Sob{\lpk \til{g}}{L^{p^*}},\\
		\Sob{T_{m_{B}}(\lpl\bdy_i\til{f},\lpk \til{g})}{L^p}&\ls2^{-j}2^{(1+2/p^*)\ell}\Sob{\lpl\til{f}}{L^p}\Sob{\lpk \til{g}}{L^{p^*}}.\label{comm:estB}
	\end{align}

Suppose $s<1+2/p$ and choose $N,\de>0$ so that $s<1+2/p^*-\de$.  From \req{comm:estB}, we apply the Bernstein inequalities and the fact that $|k-j|\leq3$ to get
	\begin{align}\label{ksimj:commB}
		\lVert T_{m_{B}}&(\lpl\bdy_i\til{f},\lpk \til{g})\rVert_{L^p}\\
			\notag&\ls2^{-(s+t-2/p)j}\Sob{\til{g}}{\dot{B}^t_{p,\infty}}\sum_{k\geq\ell+1}2^{-(1+2/p^*-s)(k-\ell)}2^{s\ell}\Sob{\lpl\til{f}}{L^p}\\
			\notag&\ls2^{-(s+t-2/p)j}C_j\Sob{\til{f}}{\dot{B}^s_{p,q}}\Sob{\til{g}}{\dot{B}^t_{p,\infty}},
	\end{align}
where	
	\begin{align}\notag
		C_j:=\sum_{j\geq\ell-2}2^{-(1+2/p^*-s)(j-\ell)}2^{s\ell}\Sob{\lpl\til{f}}{L^p},
	\end{align}
which satisfies $(C_j)_{j\in\Z}\in\ell^q$ since $s<1+2/p^*$.

Similarly, since $s<1+2/p^*-\de$, from \req{comm:estA} we can estimate
	\begin{align}\label{ksimj:commA}
		&\Sob{T_{m_{A}}(\lpl\bdy_i\til{f},\lpk \til{g})}{L^p}\\
			\notag&\ls\gam^{1-\de/\al}2^{-(\de-\al+s+t-2/p)j}\Sob{\til{g}}{\dot{B}^t_{p,\infty}}\sum_{k\geq\ell+1}2^{-(1+2/p^*-\de-s)(k-\ell)}2^{s\ell}\Sob{\lpl\til{f}}{L^p}\\
			\notag&\ls\gam^{(\al-\de)/\al}2^{(\al-\de)j}2^{-(s+t-2/p)j}C_j\Sob{\til{f}}{\dot{B}^s_{p,q}}\Sob{\til{g}}{\dot{B}^t_{p,\infty}},
	\end{align}
where	
	\begin{align}\notag
		C_j:=\sum_{j\geq\ell-2}2^{-(1+2/p^*-\de-s)(j-\ell)}2^{s\ell}\Sob{\lpl\til{f}}{L^p},
	\end{align}
which satisfies $(C_j)_{j\in\Z}\in\ell^q$ since $s<1+2/p^*-\de$.

The estimates \req{kgtrj:diag}, \req{kgtrj:holder}, \req{ksimj:holder}, \req{ksimj:commB}, and \req{ksimj:commA} prove Theorem \ref{thm:comm:est}.


\section{Proof of Theorem \ref{main:comm:thm}}\label{sect:apriori}

We consider the sequence of approximate solutions $\tht^n$ determined by
	\begin{align}\label{qg:approx}
		\begin{cases}
			\bdy_t\tht^{n+1}+\Lam^\kap\tht^{n+1}+u^n\cdotp\del\tht^{n+1}=0\ \text{in}\ \R^2\times\R_+,\\
			u^n=(-R_2\tht^{n+1},R_1,\tht^n)\ \text{in}\ \R^2\times\R_+,\\
			\tht^{n+1}\big{|}_{t=0}=\tht_0\ \text{in}\ \R^2,
		\end{cases}
	\end{align}
for $n=1,2,\dots$, and where  $\tht^0$ satisfies the heat equation
	\begin{align}
		\begin{cases}
			\bdy_t\tht^0+\Lam^\kap\tht^0=0\ \text{in}\ \R^2\times\R,\\
			\tht^{0}\big{|}_{t=0}=\tht_0\ \text{in}\ \R^2.
		\end{cases}
	\end{align}
It is well-known that $\tht^n$ is Gevrey regular for $n\geq0$.  In particular, we may define
	\begin{align}
		\til{\tht}^n(s):=G_\gam\tht^{n},\ \text{and}\ \til{u}^n(s):=G_\gam u^n(s),
	\end{align}
where $\gam=\gam(s):=\lam s^{\al/\kap}$.  By Theorem 1.2 in \cite{cmz}, the sequence $(\tht^n)_{n\geq0}$ converges in $\dot{B}^{\s}_{p,q}$, where $\s:=1+2/p-\kap$, to some function $\tht\in C([0,T];\dot{B}^\s_{p,q})$, which is a solution of \req{qg}, provided that either $T$ or $\Sob{\tht_0}{\dot{B}^\s_{p,q}}$ is sufficiently small.  Additionally, we will show that the approximating sequence satisifies
	\begin{align}\label{unif:bounds}
		\sup_{0<t<T}t^{\be/\kap}\Sob{\tht^n(t)}{\dot{B}^{\s+\be}_{p,q}}\ls \Sob{\tht_0}{\dot{B}^{\s}_{p,q}}\ \text{and}\ \lim_{T\goesto0}\sup_{n\geq0}\Sob{\tht^n(t)}{\dot{B}^{\s+\be}_{p,q}}=0,
	\end{align}
for any $0<\be<\kap/2$ and $n\geq0$, where the suppressed constant above is independent of $n$.  Hence, once \req{unif:bounds} is shown, then it will suffice to obtain \textit{a priori} bounds for $\Sob{{\tht}^n(\ \cdotp)}{X_T}$, independent of $n$.

\subsection{Part I: Approximating sequence}  To prove \req{unif:bounds}, we follow \cite{miura}.  First observe that by Lemma \ref{lin:heat:eqn} we have
	\begin{align}
		\notag\Sob{e^{-t\Lam^\kap}\tht^0}{\Bess}=\Sob{e^{-\gam\Lam^\al}e^{-t\Lam^\kap}\til{\tht}^0}{\Bess}\leq\Sob{e^{-t\Lam^\kap}\til{\tht}^0}{\Bess}\leq\Sob{\tht^0}{X_T},
	\end{align}
for all $0<t<T$.  Then Lemma \ref{lem:heat:ker} and \ref{lin:heat:eqn} imply that
	\begin{align}
		\notag t^{\be/\kap}\Sob{e^{-t\Lam^\kap}\tht^0}{\Bess}\ls \Sob{\tht_0}{\Bessig}\ \text{and}\ \lim_{T\goesto0}\sup_{0<t<T}t^{\be/\kap}\Sob{e^{-t\Lam^\kap}\tht^0}{\Bess}=0.
	\end{align}
We proceed by induction.  Assume that \req{unif:bounds} holds for some $n>0$.

We apply $\lpj$ to \req{qg:approx} to obtain
	\begin{align}\label{qg:approx1}
		\bdy_t\tht_j^{n+1}+\Lam^\kap\tht_j^{n+1}+\lpj(u^n\cdotp\del\tht^{n+1})=0.
	\end{align}
Then we take the $L^2$ inner product of \req{qg:approx1} with $|\tht_j|^{p-2}\tht_j$ and use the fact that $\del\cdotp u^n=0$ to write
	\begin{align}\label{qg:approx1}
		\frac{1}p\ddt\Sob{{\tht}_j^{n+1}}{L^p}^p&+\int_{\R^2}\Lam^\kap{\tht}_j^{n+1}|{\tht}_j^{n+1}|^{p-2}{\tht}_j^{n+1}\ dx\\
		\notag&=-\int_{\R^2}[\lpj,u^n]\del{\tht}_j^{n+1}|{\tht}_j^{n+1}|^{p-2}{\tht}_j^{n+1}\ dx.
	\end{align}
Note that we used the fact that 
	\begin{align}\label{cancellation}
		\int_{\R^2}u^n\cdotp\del{\tht}_j^{n+1}|{\tht}_j|^{p-2}\til{\tht}_j^{n+1}\ dx=0,
	\end{align}
which one obtains by integrating by parts and invoking the fact that $\del\cdotp u^n=0$ for all $n>0$.  Now, we apply Lemma \ref{pos:lem}, Lemma \ref{gen:bern}, and H\"older's inequality, so that after dividing by $\Sob{\tht_j}{L^p}^{p-1}$, \req{qg:approx1} becomes
	\begin{align}
		\notag\ddt\Sob{{\tht}_j^{n+1}}{L^p}&+C2^{\kap j}\Sob{{\tht}_j^{n+1}}{L^p}\ls\Sob{[\lpj,u^n]\del\tht^{n+1}}{L^p}.
	\end{align}
Let $\be<\kap/2$.  By Corollary \ref{comm:besov} with $s=\s+\be$ and $t=2/p-\kap+\be$ we get
	\begin{align}
		\notag\ddt\Sob{{\tht}_j^{n+1}}{L^p}&+C2^{\kap j}\Sob{{\tht}_j^{n+1}}{L^p}\ls2^{-((\s+\be)-(\kap-\be))j}c_j\Sob{\tht^{n}}{\Bess}\Sob{\tht^{n+1}}{\Bess}.
	\end{align}
Note that we have used boundedness of the Riesz transform.  Thus, multiplying by $2^{(\s+\be)j}$, then applying Gronwall's inequality gives
	\begin{align}
	\notag	&\Sob{\tht^{n+1}(t)}{\Bess}\ls \left(\sum_j\left(e^{-C2^{\kap j}t}2^{(\s+\be)j}\Sob{\lpj \tht_0}{L^p}\right)^q\right)^{1/q}\\
	\notag		&+\left(\int_0^t\sum_j\left(e^{-C2^{\kap j}(t-s)}2^{(\kap-\be)j}c_j\Sob{\tht^{n}(s)}{\Bess}\Sob{\tht^{n+1}(s)}{\Bess}\ ds\right)^q\right)^{1/q}.
	\end{align}
In particular, this implies 
	\begin{align}
	\notag	t^{\be/\kap}\Sob{\tht^{n+1}(t)}{\Bess}&\ls t^{\be/\kap} \left(\sum_j\left(e^{-C2^{\kap j}t}2^{(\s+\be)j}\Sob{\lpj \tht_0}{L^p}\right)^q\right)^{1/q}\\
	\notag		&+t^{\be/\kap}\left(\int_0^ts^{-2\be/\kap}(t-s)^{-(1-\be/\kap)}ds\right)\left(\sum_jc_j^q \right)^{1/q}\\
				&\times\left(\sup_{0<t<T}t^{\be/\kap}\Sob{\tht^{n}(s)}{\Bess}\right)\left(\sup_{0<t<T}t^{\be/\kap}\Sob{\tht^{n+1}(s)}{\Bess}\right),
	\end{align}
where we have used the fact that 
	\begin{align}\label{exp:decay}
		x^be^{-ax^c}\ls a^{-b/c}.
	\end{align} 
Since $\be<\kap/2$,  $(c_j)_{j\in\Z}\in\ell^q$ and 
	\begin{align}
	\notag	\int_0^t\frac{1}{s^{2\be/\kap}(t-s)^{1-\be/\kap}}\ ds\ls t^{-\be/\kap},
	\end{align}
we actually have
	\begin{align}\label{induction}
		\sup_{0<t<T}t^{\be/\kap}\Sob{\tht^{n+1}(t)}{\Bess}&\ls\sup_{0<t<T} t^{\be/\kap} \left(\sum_j\left(e^{-C2^{\kap j}t}2^{(\s+\be)j}\Sob{\lpj \tht_0}{L^p}\right)^q\right)^{1/q}\\
	\notag		+&\left(\sup_{0<t<T}t^{\be/\kap}\Sob{\tht^{n}(t)}{\Bess}\right)\left(\sup_{0<t<T}t^{\be/\kap}\Sob{\tht^{n+1}(t)}{\Bess}\right),
	\end{align}
In fact, \req{exp:decay} also implies
	\begin{align}\notag
		M(t):= t^{\be/\kap}\left(\sum_j\left(e^{-C2^{\kap j}t}2^{(\s+\be)j}\Sob{\lpj \tht_0}{L^p}\right)^q\right)^{1/q}\ls \Sob{\tht_0}{\Bessig}.
	\end{align}
From Lemma \ref{lin:heat:eqn} we know that
	\begin{align}\notag
		e^{-C2^{\kap j}t}\Sob{\lpj \tht_0}{L^p}\ls\Sob{e^{-c't\Lam^\kap}\lpj\tht_0}{L^p},
	\end{align}
for some ${c'}>0$, where $v_j=e^{-{c'}t\Lam^\kap}\lpj\tht_0$ solves the heat equation
	\begin{align}\notag
		\begin{cases}
			\bdy_tv+c'\Lam^\kap v=0\\
				v(x,0)=\lpj \tht_0(x).
		\end{cases}
	\end{align}
Hence
	\begin{align}\notag
		 M(t)\ls \sup_{0<t<T}t^{\be/\kap}\Sob{e^{-c't\Lam^\kap}\tht_0}{\dot{B}_{p,q}^{\s+\be}},
	\end{align}
so that by arguing as in the case $n=0$, we may deduce that
	\begin{align}\label{Mgoesto0}
		\lim_{T\goesto0}\sup_{0<t<T}M(t)=0.
	\end{align}
Recall that by hypothesis, we have
	\begin{align}\notag
		\lim_{T\goesto0}\sup_{n\geq0}\Sob{\tht^n(t)}{\dot{B}^{\s+\be}_{p,q}}=0
	\end{align}
Then returning to \req{induction}, we may choose $T$ sufficiently small so that
	\begin{align}\notag
		\sup_{0<t<T}t^{\be/\kap}\Sob{\tht^{n+1}(t)}{\Bess}\ls \sup_{0<t<T}M(t).
	\end{align}
Finally, invoking \req{Mgoesto0} completes the induction.

\subsection{A priori bounds} First apply $G_\gam\lpj$ to \req{qg:approx}.  Using the fact that $G_\gam,\lpj,\del$ are Fourier multipliers (and hence, commute), we obtain
	\begin{align}\label{qggev:approx}
		\bdy_t\til{\tht}_j^{n+1}+\Lam^\kap\til{\tht}_j^{n+1}+G_\gam\lpj(u^n\cdotp\del\tht^{n+1})=\lam^{\kap/\al}\gam^{1-\kap/\al}{\Lam^\al\til{\tht}_j^{n+1}}.
	\end{align}
Now apply Lemma \ref{pos:lem}, Lemma \ref{gen:bern}, and H\"older's inequality, as well as Lemma \ref{lem:lin:gev} to obtain
\begin{align}
		\notag\ddt&\Sob{\til{\tht}^{n+1}_j}{L^p}+C2^{\kap j}\Sob{\til{\tht}_j^{n+1}}{L^p}\\
			\notag&\ls\lam^{\kap/\al}\gam^{1-\kap/\al}\Sob{\Lam^\al{\tht}_j^{n+1}}{L^p}+\lam^{\kap/\al}\Sob{\Lam^\kap\til{\tht}^{n+1}_j}{L^p}+\Sob{[G_\gam\lpj,u^n]\del\tht^{n+1}}{L^p}.
	\end{align}
We choose $\lam>0$ small enough so that Lemma \ref{bern} implies
	\begin{align}
		\notag\ddt\Sob{\til{\tht}^{n+1}_j}{L^p}&+C2^{\kap j}\Sob{\til{\tht}_j^{n+1}}{L^p}\ls\gam^{1-\kap/\al}2^{\al j}\Sob{{\tht}_j^{n+1}}{L^p}+\Sob{[G_\gam\lpj,u^n]\del\tht^{n+1}}{L^p}.
	\end{align}
Now let $\be,\gam,\de>0$ such that
	\begin{align}\notag
		\de<\kap-\be<\frac{1}2+\frac{1}p.
	\end{align}
Then by Theorem \ref{thm:comm:est} with $s=\s+\be$ and $t=2/p-\kap+\be$, we have
	\begin{align}
		\notag\ddt\Sob{\til{\tht}^{n+1}_j}{L^p}+&2^{\kap j}\Sob{\til{\tht}^{n+1}_j}{L^p}\\
			\notag\ls&\gam^{1-\kap/\al}2^{\al j}\Sob{{\tht}_j^{n+1}}{L^p}\\
			\notag&+2^{-((\s+\be)-(\kap-\be))j}C_j\gam^{(\al-\de)/\al}2^{(\al-\de)j}\Sob{\til{\tht}^n}{\dot{B}_{p,q}^{\s+\be}}\Sob{\til{\tht}^{n+1}}{\dot{B}^{\s+\be}_{p,q}}\\
			\notag&+2^{-((\s+\be)-(\kap-\be))j}C_j\Sob{\til{\tht}^n}{\dot{B}_{p,q}^{\s+\be}}\Sob{\til{\tht}^{n+1}}{\dot{B}^{\s+\be}_{p,q}},
	\end{align}
Now by Gronwall's inequality, for $t\geq0$ we have
	\begin{align}\notag
		2^{(\s+\be) j}&\Sob{\til{\tht}^{n+1}_j(t)}{L^p}\ls 2^{\be j}e^{-C2^{\kap j}t}2^{\s j}\Sob{\lpj \tht_0}{L^p}\\
			\notag&+\int_0^t\gam(s)^{1-\kap/\al}2^{\al j}e^{-C(t-s)2^{\kap j}}2^{(\s+\be) j}\Sob{\tht^{n+1}_j(s)}{L^p}\ ds\\
				\notag	&+C_j\int_0^t\gam(s)^{(\al-\de)/\al}2^{(\al-\de+\kap-\be)j}e^{-C(t-s)2^{\kap j}}\Sob{\til{\tht}^n(s)}{\dot{B}_{p,q}^{\s+\be}}\Sob{\til{\tht}^{n+1}(s)}{\dot{B}^{\s+\be}_{p,q}}\ ds\\
				\notag	&+C_j\int_0^t2^{(\kap-\be)j}e^{-C(t-s)2^{\kap j}}\Sob{\til{\tht}^n(s)}{\dot{B}_{p,q}^{\s+\be}}\Sob{\til{\tht}^{n+1}(s)}{\dot{B}^{\s+\be}_{p,q}}.
	\end{align}
Substituting $\gam(s)=\lam s^{\al/\kap}$, applying the decay properties of the heat kernel $e^{-C(t-s)2^{\kap j}}$, Minkowski's inequality, and by defintion of the space $X_T$, we arrive at
	\begin{align}\notag
		\Sob{\til{\tht}^{n+1}(t)}{\Bess}\ls& t^{-\be/\kap}\Sob{\tht_0}{\dot{B}^\s_{p,q}}\\
			\notag&+\left(\int_0^ts^{-(1-(\al-\be)/\kap)}(t-s)^{-\al/\kap}\ ds\right)\left(\sup_{0<t\leq T}t^{\be/\kap}\Sob{\tht^{n+1}(t)}{\Bess}\right)\\
				\notag	&+\left(\int_0^ts^{(\al-\de-2\be)/\kap}(t-s)^{-(\al-\de+\kap-\be)/\kap }\ ds\right)\Sob{\tht^n}{X_T}\Sob{\tht^{n+1}}{X_T}\\
				\notag	&+\left(\int_0^ts^{-2\be/\kap}(t-s)^{-(\kap-\be)/\kap}\ ds\right)\Sob{\tht^n}{X_T}\Sob{\tht^{n+1}}{X_T}
	\end{align}
Provided that $\be<\kap/2$, we deduce that
	\begin{align}\label{induct}
		\Sob{\tht^{n+1}}{X_T}\leq & C_1\Sob{\tht_0}{\dot{B}^\s_{p,q}}+C_2\Sob{\tht^n}{X_T}\Sob{\tht^{n+1}}{X_T},
	\end{align}
for some constants $C_1,C_2>1$.  By Lemma \ref{lin:heat:eqn} we have
	\begin{align}
		\Sob{\tht^0}{X_T}\leq C_3\Sob{\tht_0}{\dot{B}^\s_{p,q}},
	\end{align}
for some constant $C_3>1$.  Assume that $\Sob{\tht_0}{\dot{B}^\s_{p,q}}\leq(4C)^{-1}$, where $C:=(C_1\vee C_3)C_2$.  If $\Sob{\tht^n}{X_T}\leq(2C_2)^{-1}$, then from \req{induct}, we get
	\begin{align}
		\frac{1}2\Sob{\tht^{n+1}}{X_T}\leq(4C_2)^{-1}.
	\end{align}
Therefore, by induction $\Sob{\tht^{n+1}}{X_T}\leq(2C_2)^{-1}$ for all $n\geq0$.

\section{Appendix}\label{sect:app}

\begin{lem}\label{concavity}
Let $\al<1$ and $f:\R^2\times\R^2\goesto\R$ be given by
	\begin{align}
		f(\xi,\eta):=\no{\xi}^\al+\no{\eta}^\al-\no{\xi+\eta}^\al.
	\end{align}
If $\no{\xi}/\no{\eta}\geq c$ for some $c>0$, then there exists $\eps>0$, depending only on $c$, such that $f(\xi,\eta)\geq\eps\no{\eta}^\al$.
\end{lem}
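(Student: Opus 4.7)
The plan is to exploit the homogeneity of $f$ and the triangle inequality to reduce the problem to a one-dimensional minimization, then conclude from the strict subadditivity of $x \mapsto x^\al$ valid for $\al < 1$.

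First, since $f(\lam\xi,\lam\eta) = \lam^\al f(\xi,\eta)$ for all $\lam > 0$, I would rescale by $\no{\eta}$ and reduce to the case $\no{\eta} = 1$; the hypothesis then reads $\no{\xi} \geq c$ and the desired bound becomes $f(\xi,\eta) \geq \eps$. For fixed $r := \no{\xi}$ and $\no{\eta}=1$, the quantity $f(\xi,\eta) = r^\al + 1 - \no{\xi+\eta}^\al$ is minimized precisely when $\no{\xi+\eta}$ is maximized; by the triangle inequality this maximum equals $r + 1$, attained when $\xi$ and $\eta$ are parallel and point in the same direction. Consequently
$$f(\xi,\eta) \geq g(r), \qquad g(r) := r^\al + 1 - (r+1)^\al.$$

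Next, I would verify that $g$ is strictly positive and monotone increasing on $(0,\infty)$. Strict positivity is the strict subadditivity of $x \mapsto x^\al$ for $\al \in (0,1)$, namely $(r+1)^\al < r^\al + 1^\al$ for $r > 0$. Monotonicity follows from the elementary computation $g'(r) = \al\bigl(r^{\al-1} - (r+1)^{\al-1}\bigr) > 0$, which holds because $x \mapsto x^{\al-1}$ is strictly decreasing for $\al < 1$. Therefore, for $r \geq c$, one has $g(r) \geq g(c) > 0$.

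Finally, setting $\eps := g(c) = c^\al + 1 - (c+1)^\al$, a quantity depending only on $c$ and the fixed exponent $\al$, and reversing the scaling reduction yields $f(\xi,\eta) \geq \eps\,\no{\eta}^\al$, as claimed. I do not foresee any genuine obstacle: after the homogeneity rescaling and the triangle inequality, the lemma reduces to the familiar fact that $x \mapsto x^\al$ is strictly subadditive whenever $\al < 1$.
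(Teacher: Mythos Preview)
Your proof is correct and follows essentially the same approach as the paper's: both rescale by homogeneity to $\no{\eta}=1$ and reduce to the scalar function $g(r)=r^\al+1-(r+1)^\al$, concluding via strict subadditivity of $x\mapsto x^\al$ for $\al\in(0,1)$. Your triangle-inequality shortcut to reach $g$ is slightly cleaner than the paper's rotation-plus-coordinate reduction (which lands on the same $g$ after setting $\eta=e_1$ and minimizing over the direction of $\xi$), and your explicit monotonicity computation $g'(r)=\al\bigl(r^{\al-1}-(r+1)^{\al-1}\bigr)>0$ yields the sharp value $\eps=g(c)$, whereas the paper records the bound as $\min\{g(-c),g(c)\}$.
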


\begin{proof}
Observe that
	\begin{align}\notag
		f(\xi,\eta)=\no{\eta}^\al\left(\left\lVert\frac{{\xi}}{\no{\eta}}\right\rVert^{\al}+1-\left\lVert\frac{\xi}{\no{\eta}}+\frac{\eta}{\no{\eta}}\right\rVert^\al\right).
	\end{align}
Also observe that if $R$ is a rotation matrix, then $f(R\xi,R\eta)=f(\xi,\eta)$.  Thus, 
we may assume that $\no{\xi}\geq c$ and that $\eta=e_1$, where $e_1:=(1,0)$.  Now observe that
	\begin{align}
		\notag f(\xi,\eta)&=(\xi_1^2+\xi_2^2)^{\al/2}+1-((\xi_1+\eta_1)^2+(\xi_2+\eta_2)^2)^{\al/2}\\
			\notag	&=(\xi_1^2+\xi_2^2)^{\al/2}+1-((\xi_1+1)^2+\xi_2^2)^{\al/2}.
	\end{align}
Let $x:=\no{\xi}$.  Then
	\begin{align}\notag
		f(\xi,\eta)=g_{\xi_1}(x):=x^{\al}+1-(x^2+1+2\xi_1)^{\al/2},
	\end{align}
where $x\geq c$.  Thus, we may assume $\xi_2=0$ and $\xi_1\geq c$.  Finally, elementary calculation shows that $g(x):=|x|^\al+1-|x+1|^\al\geq \min\{g(-c),g(c)\}>0$, provided that $|x|\geq c$.
\end{proof}

Now we prove of Theorem \ref{axis:thm}

\begin{proof}
Since $m(\xi,\ \cdotp)$ is compactly supported, we may write
	\begin{align}
		{m}(\xi,\eta)=\sum_{k\in\Z^d} \hat{m}_k(\xi)e^{ik\cdotp\eta}\chi(\eta),
	\end{align}
for some smooth function $\chi$ on $\R^d$ such that $\spt\chi\sub[1/4\ls\no{\eta}\ls4]$ and $\chi(\eta)=1$ for $\eta\in[1/2\leq\no{\eta}\leq 2]$, where $\hat{m}_k(\xi):=\hat{m}(\xi,k)$ is the $k$-th Fourier coefficient of $m$.

Observe that integrating by parts gives
	\begin{align}\notag
		\hat{m}_k(\xi)=\int e^{-ik\cdotp \eta} m(\xi,\eta)\ d\eta=c_\al(-ik)^{-|\al|}\til{m}_{k,\al}(\xi),
	\end{align}		 
for all $\al\in\N^d$, where
	\begin{align}\notag
		\til{m}_{k,\al}(\xi):=\int e^{-ik\cdotp\eta}\bdy^\al_\eta (m(\xi,\eta)\chi(\eta))\ d\eta.
	\end{align}
Now by \req{hm:cond} and the fact that $\chi$ is compactly supported, we have
	\begin{align}\label{m:til}
		\notag\left|\bdy_\xi^\be\til{m}_k(\xi)\right|&\ls\sum_{\al_1+\al_2=\al}c_\al\int\left|\bdy^\be_\xi\bdy^{\al_1}_\eta m(\xi,\eta)\bdy^{\al_2}_\eta\chi(\eta)\right|\ d\eta\\
			&\ls_{\be,\al,d}\no{\xi}^{-|\be|}\int_{[1/4\ls\no{\eta}\ls4]} \no{\eta}^{-|\al_1|}\ d\eta\notag\\
			&\ls_{\be,\al,d}\no{\xi}^{-|\be|}.
	\end{align}
Thus $\til{m}_{k,\al}$ is a H\"ormander-Mikhlin multiplier.  Note that the suppressed constant in \req{m:til} is independent of $k$.
Choose $|\al|$ large enough so that $\sum_{k\in\Z^d}(1+|k|)^{-|\al|}<\infty$.  

Finally, observe that
	\begin{align}\notag
		T_m(f,g)=\sum_{k\in\Z^d}T_{m_k}(f)T_{\chi_k}(g)=c_\al\sum_{k\in\Z^d}(-ik)^{-|\al|}(T_{\til{m}_{k,\al}}f)(T_{\chi}{\tau}_{-k}g).
	\end{align}
Therefore, by Minkowski's inequality, H\"older's inequality, and the H\"ormander-Mikhlin multiplier theorem we have
	\begin{align}\notag
		\Sob{T_m(f,g)}{L^r}\ls\left(\sum_{k\in\Z^d}(1+|k|)^{-|\al|}\right)\Sob{f}{L^p}\Sob{\chi*\tau_{-k}g}{L^q}.
	\end{align}
Finally, Young's convolution inequality and translation invariance of $dx$ completes the proof.

\end{proof}

The next proposition shows that Marcinkiewicz multipliers are dilation invariant.

\begin{prop}
Let $1/r=1/p+1/q$ and $T_m:L^p\times L^q\goesto L^r$ be a bounded bilinear multiplier operator whose multplier, $m$, satisfies $m\in L^\infty(\R^d\times\R^d)$
	\begin{align}\label{mc:cond}
		\left|\bdy_\xi^{\be_1}\bdy_\eta^{\be_2}m(\xi,\eta)\right|\ls_{\be,d}\no{\xi}^{-|\be_1|}\no{\eta}^{-|\be_2|},
	\end{align}
for all $\xi,\eta\in\R^d\smod\{0\}$ and multi-indices $\be_1,\be_2\in\N^d$.  Then $T_{m_\lam}$ is also bounded with the same operator norm, where $m_\lam$ is given by
	\begin{align}\notag
		m_\lam(\xi,\eta):=m(\lam\xi,\lam\eta).
	\end{align}
\begin{proof}[Proof of claim]
 We first show that $m_\lam$ also satisifes \req{mc:cond}.  Observe that
	\begin{align}\notag
		\bdy_\xi^{\be_1}\bdy_\eta^{\be_2}m_\lam(\xi,\eta)=\lam^{|\be_1|+|\be_2|}(\bdy_\xi^{\be_1}\bdy_\eta^{\be_2}m)(\lam \xi,\lam\eta)=\no{\xi}^{-|\be_1|}\no{\eta}^{-|\be_2|}.
	\end{align}
Then since $m$ satisifes \req{mc:cond} we have
	\begin{align}\notag
		\left|\bdy_\xi^{\be_1}\bdy_\eta^{\be_2}m_\lam(\xi,\eta)\right|\ls \lam^{|\be_1|+|\be_2|}\no{\lam\xi}^{-|\be_1|}\no{\lam\eta}^{-|\be_2|}.
	\end{align}
Now we prove the  claim.  Indeed, let $f\in L^p, g\in L^q$, and $\lam>0$.  Then
	\begin{align*}
		T_{m_\lam}(f,g)(x)&=\int_{\R^d}\int_{\R^d}e^{ix\cdotp(\xi+\eta)}m_\lam(\xi,\eta)\hat{f}(\xi)\hat{g}(\eta)\ d\xi\ d\eta\notag\\
				\notag&=\int_{\R^d}\int_{\R^d}e^{ix\cdotp(\xi+\eta)}m(\lam\xi,\lam\eta)\hat{f}(\xi)\hat{g}(\eta)\ d\xi\ d\eta\notag\\
\notag				&=\int_{\R^d}\int_{\R^d}e^{i(x/\lam)\cdotp(\xi'+\eta')}m({\xi'},{\eta'})\lam^{-d}\hat{f}({\xi'}/\lam)\lam^{-d}\hat{g}({\eta'}/\lam)\ d{\xi'}\ d{\eta'}\\
	\notag			&=\int_{\R^d}\int_{\R^d}e^{i(x/\lam)\cdotp(\xi+\eta)}m(\xi,\eta)\widehat{f_\lam}(\xi)\widehat{g_\lam}(\eta)\ d\xi\ d\eta\\
				&=T_m(f_\lam,g_\lam)(x/\lam)=(T_m(f_\lam,g_\lam))_{1/\lam}(x).
	\end{align*}
This implies
	\begin{align*}
		\no{T_{m_\lam}(f,g)}_{L^r}&=\lam^{d/r}\no{T_m(f_\lam,g_\lam)}_{L^r}\\
					&\ls\lam^{d/r}\no{f_\lam}_{L^p}\no{g_\lam}_{L^q}=\lam^{d/r}\lam^{-d/p}\lam^{-d/q}\no{f}_{L^p}\no{g}_{L^q}.
	\end{align*}	
In particular, $\no{T_{m_\lam}}\leq\no{T_m}$.  On the other hand, one can similarly argue
	\begin{align*}
		\no{T_m(f,g)}_{L^r}&=\lam^{-d/r}\no{T_{m_{1/\lam}}(f_{1/\lam},g_{1/\lam})}_{L^r}\\
					&\ls\lam^{-d/r}\no{f_{1\lam}}_{L^p}\no{g_{1/\lam}}_{L^q}=\lam^{-d/r}\lam^{d/p}\lam^{d/q}\no{f}_{L^p}\no{g}_{L^q}.
	\end{align*}
Therefore $\no{T_m}\leq\no{T_{m_{1/\lam}}}$.  This completes the proof.

\end{proof}

\end{prop}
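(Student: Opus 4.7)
The plan is to exploit the natural scaling of the Fourier transform. First I would verify that $m_\lam$ itself satisfies the Marcinkiewicz condition \req{mc:cond} with the same implicit constants: by the chain rule, each derivative on $m_\lam$ produces a factor of $\lam$, and the homogeneity of the right-hand side, $\no{\lam\xi}^{-|\be_1|}=\lam^{-|\be_1|}\no{\xi}^{-|\be_1|}$, exactly cancels these factors. This guarantees $T_{m_\lam}$ is at least a well-defined bilinear multiplier operator.

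Next, the key identity is obtained by change of variables in the Fourier integral. Starting from
\begin{align*}
T_{m_\lam}(f,g)(x)=\int\int e^{ix\cdotp(\xi+\eta)}m(\lam\xi,\lam\eta)\hat{f}(\xi)\hat{g}(\eta)\,d\xi\,d\eta,
\end{align*}
I would substitute $\xi'=\lam\xi$, $\eta'=\lam\eta$, pick up a Jacobian $\lam^{-2d}$, and recognize the resulting Fourier factors $\lam^{-d}\hat{f}(\xi'/\lam)$ and $\lam^{-d}\hat{g}(\eta'/\lam)$ as the Fourier transforms of the dilates $f_\lam$ and $g_\lam$ (in the sense of \req{dilations}). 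The remaining exponential $e^{i(x/\lam)\cdotp(\xi'+\eta')}$ then gives the pointwise identity
\begin{align*}
T_{m_\lam}(f,g)(x)=\bigl(T_m(f_\lam,g_\lam)\bigr)_{1/\lam}(x).
\end{align*}

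From here the $L^p$ scaling law $\no{f_\lam}_{L^p}=\lam^{-d/p}\no{f}_{L^p}$, combined with $\no{h_{1/\lam}}_{L^r}=\lam^{d/r}\no{h}_{L^r}$, reduces the norm estimate to a product of powers of $\lam$ with exponent $d/r-d/p-d/q$, which vanishes precisely by the H\"older relation $1/r=1/p+1/q$. Applying the boundedness of $T_m$ therefore yields $\no{T_{m_\lam}}\leq\no{T_m}$ with no scaling penalty. To upgrade this to equality, I would simply run the same argument with $\lam$ replaced by $1/\lam$, observing $(m_\lam)_{1/\lam}=m$, to obtain the reverse inequality.

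I do not expect a genuine obstacle here; the argument is an algebraic scaling computation and the only place for error is bookkeeping of powers of $\lam$. What is worth emphasizing in the write-up is the role of the condition $1/r=1/p+1/q$, since it is the precise reason the dilation invariance holds at the level of the operator norm (rather than merely up to a scaling factor), and this mirrors the classical dilation invariance of the linear H\"ormander--Mikhlin theorem.
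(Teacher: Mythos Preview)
Your proposal is correct and follows essentially the same route as the paper: verify that $m_\lam$ inherits the Marcinkiewicz bounds via the chain rule, establish the scaling identity $T_{m_\lam}(f,g)=(T_m(f_\lam,g_\lam))_{1/\lam}$ by a change of variables in the Fourier integral, apply $L^p$-scaling together with $1/r=1/p+1/q$ to get $\no{T_{m_\lam}}\leq\no{T_m}$, and then reverse $\lam\mapsto1/\lam$ for equality.
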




\begin{thebibliography}{100}

\bibitem{bae}
H.~Bae.
\newblock Global well-posedness of dissipative quasi-geostrophic equations in critical spaces.
\newblock {\it Proceedings of the American Mathematical Society}, 136(1), 257-261, 2008.

\bibitem{bae:bis:gev}
H.~Bae and A.~Biswas.
\newblock Gevrey regularity for a class of dissipative equations with analytic nonlinearity.
\newblock {\it submitted}, pp. 1-30, August 2013.

\bibitem{bae:bis:tad1}
H.~Bae, A.~Biswas, E.~Tadmor.
\newblock Analyticity of the subcritical and critical quasi-geostrophic equations in Besov spaces.
\newblock {\underline{arXiv.1310.1624v1}}, pp. 1-20, October 6, 2013.

\bibitem{bae:bis:tad2}
H.~Bae, A.~Biswas, E.~Tadmor.
\newblock Analyticity and decay estimates of the {N}avier-{S}tokes equations in critical Besov spaces.
\newblock {\it Arch. Ration. Mech. Anal}, 3, 963-991, 2012.


\bibitem{bcd}
H.~Bahouri, J.Y.~Chemin, R.~Danchin.
\newblock Fourier {A}nalysis and {N}onlinear {P}artial {D}ifferential {E}quations.
\newblock {\it Grundlehren der mathematischen Wissenschaften}, Springer-Verlag, 343, 2011.

\bibitem{biswas:gev}
A.~Biswas.
\newblock Gevrey regularity for a class of dissipative equations with applications to decay.
\newblock {\it Journal of Differential Equations}, 253, 2739-2764, 2012.

\bibitem{biswas:qg}
A.~Biswas.
\newblock Gevrey regularity for the supercritical quasi-geostrophic equation.
\newblock {\it submitted}, 2012.

\bibitem{bs}
A.~Biswas and D.~Swanson.
\newblock Gevrey regularity to the {3-D} {N}avier-{S}tokes equations with
  weighted $\ell^p$ initial data.
\newblock {\it Indiana University Mathematics Journal}, 56(3):1157--1188, 2007.


\bibitem{caff:vass}
L.~Caffarelli and A.~Vasseur.
\newblock Drift diffusion equations with fractional diffusion and the quasi-geostrophic equation.
\newblock {\it Annals of Math. }, 171(3), 1903-1930, 2010.

\bibitem{chae:lee}
D.~Chae and J.~Lee.
\newblock Global {W}ell-{P}osedness in the {S}uper-{C}ritical {D}issipative {Q}uasi-{G}eostrophic {E}quations.
\newblock {\it Communications in Mathematical Physics}, 233, 297-311, 2003.

\bibitem{cmz}
Q.~Chen, C.~Miao, Z.~Zhang.
\newblock A {N}ew {B}ernstein's {I}nequality and the 2{D} {D}issipative {Q}uasi-{G}eostrophic {E}quation.
\newblock {\it Communications in Mathematical Physics}, 271, 821-838, 2007.

\bibitem{coif:mey}
R.~Coifman and Y.~Meyer.
\newblock Commutateurs d'int\'egrales singuli\`eres et op\'erateurs multiline\'eaires.
\newblock {\it Ann. Inst. Fourier}, 28(3), 177-202, 1978.

\bibitem{const:vic}
P.~Constantin and V.~Vicol.
\newblock Nonlinear maximum principles for dissipative linear nonlocal operators and applications.
\newblock {\underline{arXiv.1110.0179v1}}, pp. 1-24, October 2, 2011.

\bibitem{const:wu:qgholder}
P.~Constantin and J.~Wu.
\newblock Regularity of {H}\"older continuous solutions of the supercritical quasi-geostrophic equation.
\newblock {\it Ann. Inst. H. Poincare Anal. Non Lineaire}, 6:2681-2692, 2008.

\bibitem{const:wu:qgwksol}
P.~Constantin and J.~Wu.
\newblock Behavior of solutions of 2{D} quasi-geostrophic equations.
\newblock {\it Siam J. Math Anal.}, 30(5),937-948, 1999.

\bibitem{const:wu:qgholder}
P.~Constantin and J.~Wu.
\newblock Regularity of {H}\"older continuous solutions of the supercritical quasi-geostrophic equation.
\newblock {\it Ann. Inst. H. Poincare Anal. Non Lineaire}, 6:2681-2692, 2008.

\bibitem{cmt}
P.~Constantin, A.~Majda, E.~Tabak.
\newblock Formation of strong fronts in the 2-{D} quasigeostrophic thermal active scalar.
\newblock {\it Nonlinearity}, 7, 1495-1533, 1994.

\bibitem{const:tarf:vic}
P.~Constantin, A.~Tarfulea, and V.~Vicol.
\newblock Long time dynamics of forced critical SQG.
\newblock {\underline{arXiv.1308.0640v1}}, pp. 1-38, August 2, 2013.

\bibitem{const:tarf:vic2}
P.~Constantin, A.~Tarfulea, and V.~Vicol.
\newblock Absence of anomalous dissipation of energy in forced two dimensional fluid equations.
\newblock {\underline{arXiv.1305.7089v1}}, pp. 1-27, May 30, 2013.


\bibitem{cor:sqg}
D.~C\'ordoba.
\newblock Nonexistence of simple hyperbolic blow-up for the quasi-geostrophic equation
\newblock {\it Annals of Mathematics}, 148, 1135-1152, 1998.

\bibitem{cor:cor}
A.~C\'ordoba and D.~C\'ordoba.
\newblock A {M}aximum {P}rinciple {A}pplied to {Q}uasi-{G}eostrophic {E}quations.
\newblock {\it Communications in Mathematical Physics}, 249, 511-528, 2004.

\bibitem{dab}
M.~Dabkowski.
\newblock Eventual regularity of the solutions to the supercritical dissipative quasi-geostrophic equation.
\newblock {\it Geometric and Functional Analysis}, 21, 1-13, 2011.

\bibitem{dab:kis:sil:vic}
M.~Dabkowski, A.~Kiselev, L.~Silvestre, and V.~Vicol.
\newblock Global well-posedness of slightly supercritical active scalar equations.
\newblock {\underline {arXiv:1203.6302v1}}, pp. 1-26, March 28, 2012.

\bibitem{dong}
H.~Dong.
\newblock Dissipative quasi-geostrophic equations in critical Sobolev spaces: smoothing effect and global well-posedness.
\newblock {\it Discrete and continuous dynamical systems}, 26(4), 1197-1211, 2010.

\bibitem{dong:du}
H.~Dong and D.~Du.
\newblock Global well-posedness and a decay estimate for the critical dissipative quasi-geostrophic equation in the whole space.
\newblock {\underline{arXiv.math/0701828v2}}, pp. 1-9, February 13, 2007.

\bibitem{dong:li:crit:besov}
H.~Dong and D.~Li.
\newblock On the 2{D} critical and supercritical dissipative quasi-geostrophic equation in {B}esov spaces.
\newblock {\it J. Differential Equations}, 248, 2684-2702, 2010.

\bibitem{dong:li:subcrit}
H.~Dong and D.~Li.
\newblock {S}patial {A}nalyticity of the {S}olutions to the {S}ubcritical {D}issipative {Q}uasi-geostrophic {E}quations.
\newblock {\it Arch. Rational Mech. Anal.}, 189, 131-158, 2008.

\bibitem{ft}
C.~Foias and R.~Temam.
\newblock Gevrey class regularity for the solutions of the {N}avier-{S}tokes
  equations,
\newblock {\it Journal of Functional Analysis}, 87:350--369, 1989.

\bibitem{graf:kalt}
L.~Grafakos and N.J.~Kalton.
\newblock The {M}arcinkiewicz mutliplier condition for blinear operators.
\newblock {\it Studia Mathematica}, 146(2), 115-156, 2001.

\bibitem{held:sqg}
I.M.~Held, R.T.~Pierrehumbert, S.T.~Garner, K.L.~Swanson.
\newblock Surface quasi-geostrophic dynamics.
\newblock {\it J. Fluid Mech.}, 282, 1-20, 1995.

\bibitem{hmidi:keraani}
T.~Hmidi and S.~Keraani.
\newblock Global solutions of the super-critical 2{D} quasi-geostrophic equation in {B}esov spaces.
\newblock {\it Advances in Mathematics}, 214, 618-638, 2007.

\bibitem{ju:qgattract}
N.~Ju.
\newblock The {M}aximum {P}rinciple and the {G}lobal {A}ttractor for the {D}issipative 2{D} {Q}uasi-{G}eostrophic {E}quations.
\newblock {\it Communications in Mathematicals Physics}, 255, 161-181, 2005.

\bibitem{ju:super}
N.~Ju.
\newblock Global {S}olutions to the {T}wo {D}imensional {Q}uasi-{G}eostrophic {E}quation with {C}ritical or {S}uper-{C}ritical {D}issipation.
\newblock {\it Mathematische Annalen}, 334, 627-642, 2006.

\bibitem{kis:naz}
A.~Kiselev and F.~Nazarov.
\newblock Variation on a theme of {C}affarelli and {V}asseur.
\newblock {\it Journal of Mathematical Sciences}, 166(1), 31-39, 2010

\bibitem{kis:naz:vol}
A.~Kiselev, F.~Nazarov, and A.~Volberg.
\newblock Global well-posedness for the critical 2{D} dissipative quasi-geostrophic equation.
\newblock {\it Inventiones mathematicae}, 167, 445-453, 2007.

\bibitem{lmr}
P.G.~Lemari\'e-Rieusset.
\newblock Recent developments in the {N}avier-{S}tokes problem.
\newblock {\it Chapman \& Hall/CRC Research Notes in Mathematics}, 431, 2002.

\bibitem{miura}
H.~Miura.
\newblock Dissipative {Q}uasi-{G}eostrophic {E}quation for {L}arge {I}nitial {D}ata in the {C}ritical {S}obolev {S}pace.
\newblock {\it Communications in Mathematical Physics}, 267, 141-157, 2006.

\bibitem{resnick}
S.G.~Resnick.
\newblock Dynamical problems in non-linear advective partial differential equations.
\newblock {\it PhD thesis}, pp. 1-86, August 1995.

\bibitem{runst:sick}
T.~Runst and W.~Sickel.
\newblock Sobolev {S}paces of {F}ractional {O}rder, {N}emytskij {O}perators, and {N}onlinear {P}artial {D}ifferential {E}quations,
\newblock {De Gruyter Series in Nonlinear Analysis and Applications}, Walter de Gruyter, 3, 1996.

\bibitem{wu:besov}
J.~Wu.
\newblock Global solutions of the 2{D} dissipative quasi-geostrophic equation in {B}esov spaces.
\newblock {\it Siam J. Math. Anal.}, 36(3), 1014-1030, 2005.

\bibitem{wu:lowerbounds}
J.~Wu.
\newblock Lower Bounds for an Integral Involving Fractional Laplacians and the Generalized Navier-Stokes Equations in Besov spaces.
\newblock {\it Communications in Mathematical Physics}, 263, 803-831, 2005.


\end{thebibliography}
\end{document}